\documentclass[10pt]{amsart}
\usepackage{amsmath,amssymb}
\usepackage[T1]{fontenc}
\usepackage{lmodern,textcomp}
\usepackage{color}
\usepackage{graphicx}
\usepackage{amsthm}
\usepackage[all]{xy}
\usepackage{verbatim}
\usepackage{caption}

\usepackage{pgf,tikz}
\usetikzlibrary{arrows}

\def\R{\mathbb{R}}
\def\C{\mathbb{C}}
\def\Z{\mathbb{Z}}
\def\N{\mathbb{N}}
\def\Q{\mathbb{Q}}
\def\P{{\mathbb P}}
\def\O{\mathcal{O}}
\def\L{\mathcal{L}}
\def\A{\mathcal{A}}
\def\oA{\overline{\mathcal{A}}}
\def\F{\mathcal{F}}

\def\oC{\overline{\mathcal{C}}}
\def\oM{\overline{\mathcal{M}}}
\def\M{{\mathcal{M}}}

\def\oH{\overline{\mathcal{H}}}
\def\H{{\mathcal{H}}}

\begin{document}

\theoremstyle{definition}
\newtheorem{mydef}{Definition}[section]
\newtheorem{mynot}[mydef]{Notation}
\newtheorem{example}[mydef]{Example}
\newtheorem{remark}[mydef]{Remark}
\newtheorem{problem}[mydef]{Problem}
\theoremstyle{plain}
\newtheorem{myth}[mydef]{Theorem}
\newtheorem{myconj}[mydef]{Conjecture}
\newtheorem{mypr}[mydef]{Proposition}
\newtheorem{prdef}[mydef]{Proposition-Definition}
\newtheorem{mylem}[mydef]{Lemma}
\newtheorem{mycor}[mydef]{Corollary}
\newtheorem{assumption}[mydef]{Assumption}

\title[Volumes and Siegel-Veech constants of $\mathcal{H}(2g-2)$]{Volumes and Siegel-Veech constants of $\mathcal{H}(2g-2)$ and Hodge integrals}
\author{Adrien Sauvaget}
\address{IMJ-PRG, Universit\'e Pierre et Marie Curie\\ 4 place Jussieu\\ 75005 Paris, France}
\email{adrien.sauvaget@imj-prg.fr}
\date{\today}

\keywords{Moduli space of curves, translation surfaces, Masur-Veech volumes, Hodge integrals}
\subjclass[2010]{14H15, 14N10, 30F30, 30F60, 14C17}
\maketitle

\begin{abstract} 
In the 80's H. Masur and W. Veech defined two numerical invariants of strata of abelian differentials: the volume and the Siegel-Veech constant. Based on numerical experiments, A. Eskin and A. Zorich proposed a series of conjectures for the large genus asymptotics of these invariants.  By a careful analysis of the asymptotic behavior of quasi-modular forms, D. Chen, M. Moeller, and D. Zagier proved that this conjecture holds for strata of differentials with simple zeros. 

Here, with a mild assumption of existence of a good metric, we show that the conjecture holds for the other extreme case,  i.e. for strata of differentials with a unique zero. Our main ingredient is the expression of the numerical invariants of these strata in terms of Hodge integrals on moduli spaces of curves.
\end{abstract}

\setcounter{tocdepth}{1}
\tableofcontents

\section{Introduction}

\subsection{The Hodge bundle and its stratification} 

Let $g$ and $n$ be nonnegative integers satisfying  $2g-2+n>0$. We denote by $\M_{g,n}$ (respectively $\oM_{g,n}$) the moduli space of smooth (respectively stable nodal) curves of genus $g$ with $n$ marked points. Let $\pi:\oC_{g,n}\to \oM_{g,n}$ be the universal curve and $\sigma_i : \oM_{g,n} \to \oC_{g,n}$ the sections associated to marked points for $1\leq i\leq n$.

The {\em Hodge bundle} $p:\oH_{g,n}\to \oM_{g,n}$
 is the rank $g$ vector bundle whose sheaf of sections is $R^0\pi_*(\omega_{\oC_{g,n}/\oM_{g,n}})$. Its total space is the space of tuples $(C,x_1,\ldots,x_n,\alpha)$: stable curves endowed with an abelian differential. We also denote by $p: \P\oH_{g,n}\to \oM_{g,n}$ its projectivization and by $p:\H_{g,n}\to \M_{g,n}$ the restriction of the Hodge bundle to the locus of smooth curves.

The total space of the Hodge bundle is stratified according to the orders of zeros of the differential. Let $\mu=(k_1,\ldots, k_n)$ be a partition of $(2g-2)$. We denote by $\H(\mu)\subset \H_g$ the locus of curves endowed with an abelian differential with zeros of order $k_1,\dots, k_n$ (here the zeros are not marked). The dimension of $\H(\mu)$ is $2g-1+n$ and the Hodge bundle is the disjoint union of the $\H(\mu)$ for all partitions $\mu$ of $2g-2$. 

The locus $\H(\mu)$ is invariant under the $\C^*$-action. We denote by $\P\H(\mu)$ its projectivization. Besides, we denote by $\oH(\mu)$ (respectively $\P\oH(\mu)$) the Zariski closure of $\H(\mu)$ in $\oH_g$ (respectively of $\P\H(\mu)$ in $\P\oH_g$). The space $\P\oH(\mu)$ is a compact (singular) DM stack that has been precisely described in~\cite{BCGGM}. 

In the present paper we are mainly interested in the strata $\H(2g-2)$ of differentials with a unique zero. 

\subsection{Mazur-Veech Volumes}

Fix $g,n$ and $\mu$ as above. Let $(C,\alpha)$ be a point in $\H(\mu)$. We denote by $x_1,\ldots, x_n$ the zeros of $\alpha$. Consider the relative cohomology group 
$$
H= H^1(C, \{x_1, \ldots, x_n\}, \Z).
$$
The space $H\otimes \C$ provides a system of local coordinates of $\H(\mu)$ at the neighborhood of $(C,\alpha)$ called the {\em period coordinates}. The transition maps between two such system of coordinates are given by matrices with integer coefficients.  Therefore the space $\H(\mu)$ is endowed with an affine structure and with a volume form $\nu$ on : in period coordinates, this volume form is  the Lebesgue volume form normalized in such a way that the lattice $H\otimes(\Z\oplus i\Z)$ has volume 1. 

We denote by $\H_1(\mu)\subset \H(\mu)$ (respectively $\H_{\leq 1}(\mu)$) the subspace defined  by
$$
\frac{i}{2}\int_C \alpha \wedge \overline{\alpha} = 1 \text{  (respectively $\leq 1$)}.
$$ 
The volume form on $\H(\mu)$ induces a form $\nu_1$ on $\H_1(\mu)$, by a disintegration of $\nu$ (see~\cite{EskKonZor} for definition). The total volume of $\H_{1}(\mu)$ for $\nu_1$ is finite (see~\cite{Mas} and~\cite{Vee}). This is the {\em Masur-Veech volume} (or simply the  {\em volume}) of $\H_1(\mu)$. We denote it by ${\rm Vol}(\mu)$. 

\subsection{Siegel-Veech constants}

The spaces $\H(\mu)$ and $\H_1(\mu)$ are endowed with an action of ${\rm SL}(2,\R)$. This action is defined in period coordinates: the group ${\rm SL}(2,\R)$ acts simultaneously on all coordinates. The diagonal sub-group 
$$
\bigg\{\begin{pmatrix}
e^{t} & 0\\
0& e^{-t}
\end{pmatrix}, t\in \R \bigg\}
$$
endows $\H_1(\mu)$ with an ergodic flow with finite measure. This action lifts to the real vector bundle whose fiber at $(C,\alpha)$ is given by $H^1(C,\R)$. This vector bundle is also  endowed with an equivariant measure. This set-up allows to define the $2g$ {\em Lyapunov exponents} of the vector bundle. These $2g$ invariants are of the following form: $\lambda_1\geq  \lambda_2\ldots \geq \lambda_g\geq 0\geq \lambda_{g+1}=-\lambda_g\geq  \ldots \geq \lambda_{2g}= -\lambda_1$ (see~\cite{EskKonZor}) . We define the {\em Siegel-Veech constant} of $\H_1(\mu)$ as 
$$
c_{\rm area}(\mu)=  \frac{3}{\pi^2} \left( \lambda_1+\ldots + \lambda_g - \frac{1}{12} \sum_{k_i\in \mu} \frac{k_i(k_i+2)}{k_i+1}\right).
$$
\begin{remark}
This definition of  $c_{\rm area}(\mu)$ is actually  a theorem (see~\cite{EskKonZor}). The Siegel-Veech constant has an inner geometrical interpretation in terms of the number of families of closed geodesics of a general curve in $\H(\mu)$.
\end{remark}

\subsection{Intersection numbers on strata of differentials}

In the present text, unless otherwise  mentioned, we consider cohomology classes with rational coefficients. We use the following cohomology classes in $\oM_{g,n}$:
\begin{itemize}
\item for all $1\leq i\leq n$, let $\L_i=\sigma_i^* \omega_{\oC_{g,n}/\oM_{g,n}}\to \oM_{g,n}$ be the line cotangent line bundle at the $i$-th marked point. We denote by $\psi_i=c_1(\L_i) \in H^2(\oM_{g,n})$; 
\item for all $1\leq i\leq g$, we denote by $\lambda_i = c_i(\oH_{g,n})\in H^{2i}(\oM_{g,n})$ (we will no longer mention Lyapunov exponents, therefore the notation $\lambda_i$ will only stand for these Chern classes);
\item $\delta_0\in H^2(\oM_{g,n})$ is the Poincar\'e-dual class of the divisor whose generic point is a curve with a self-intersecting node.
\end{itemize}
If no confusion arises, we use the same notation for classes in $H^*(\oM_{g,n})$ and their pull-back to $H^*(\P \oH_{g,n})$ under $p$. 

We denote by $L=\O(1)\to \P\oH_{g,n}$ the dual of the canonical  line bundle and by $\xi=c_1(L)$ the {\em canonical class} (beware that here canonical class does not refer to the determinant of the cotangent bundle). We recall that the splitting principle implies
$$
H^*(\P\oH_{g,n})\simeq H^*(\oM_{g,n})[\xi]/ (\xi^g+\lambda_1 \xi^{g-1}+ \ldots + \lambda_g).
$$
The top cohomology group $H^{2(4g-4+n)}(\P\oH_{g,n})$ is canonically identified with $\Q$ by Poincar\'e-duality. We consider the following intersection numbers 
$$\int_{\P\oH(\mu)} \! \! \! \xi^{2g-2+n} \in \Q.$$

We will see that the intersection number $\int_{\P\oH(\mu)} \!  \xi^{2g-2+n}$ vanishes if $\mu\neq (2g-2)$. We will denote by
$$a_g = (-1)^g \int_{\P\oH(2g-2)} \! \! \! \! \! \! \! \! \! \xi^{2g-1}.$$
The line bundle $\O(1)$ is endowed with a natural singular hermitian metric (see Section~\ref{sec:volint}). In all the paper we will make the same assumption as in~\cite{KonZor1}.
\begin{assumption}\label{assumption}
There exists a desingularization $\phi:X\to\P\H(\mu)$ such that the curvature form associated to the hermitian metric on $\phi^*\O(1)$ is good in the sense of~\cite{Mum2}
\end{assumption}
Under this assumption, we have the following relation between $a_g$ and ${\rm Vol}(2g-2)$.
\begin{mypr}\label{vgag}
For all $g\geq 1$, we have
\begin{equation}\label{foragvg}
{\rm Vol}(2g-2)=\frac{2(2\pi)^{2g}}{(2g-1)!} a_g.
\end{equation}
\end{mypr}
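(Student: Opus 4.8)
The strategy is to carry the Masur--Veech volume form through the tautological description of the stratum and to identify it, up to an explicit universal constant, with a top power of the Chern form of $\O(1)$. \textbf{Reductions.} Disintegrating $\nu$ along $(C,\alpha)\mapsto \tfrac i2\int_C\alpha\wedge\bar\alpha$, which is homogeneous of real degree $2$ and cuts out $\H_1(2g-2)$ as its unit level set, one gets (with the normalisation of \cite{EskKonZor}) $\mathrm{Vol}(2g-2)=\big(\dim_\R\H(2g-2)\big)\cdot\mathrm{Vol}\big(\H_{\leq1}(2g-2)\big)=4g\cdot\mathrm{Vol}\big(\H_{\leq1}(2g-2)\big)$. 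Since $\dim_\C\P\oH(2g-2)=2g-1$, Assumption~\ref{assumption} together with the theory of good metrics \cite{Mum2} gives that the curvature form $c_1\big(\O(1),h\big)$ of the metric $h$ (representing $\xi$) satisfies $\int_{\P\H(2g-2)}c_1\big(\O(1),h\big)^{2g-1}=\int_{\P\oH(2g-2)}\xi^{2g-1}=(-1)^g a_g$, the integral over the non-compact locus $\P\H(2g-2)$ being convergent and computing the intersection number on the compactification. Hence \eqref{foragvg} is equivalent to
$$\mathrm{Vol}\big(\H_{\leq1}(2g-2)\big)=\frac{(-1)^g(2\pi)^{2g}}{(2g)!}\int_{\P\H(2g-2)}c_1\big(\O(1),h\big)^{2g-1},$$
which I will prove by a direct computation.

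\textbf{The two structures on period coordinates.} As $\mu=(2g-2)$ has a single zero $x$, the natural map $H^1(C,\{x\},\C)\to H^1(C,\C)$ is an isomorphism, so period coordinates identify $\H(2g-2)$, locally, with an open subset of $H^1(C,\C)$ carrying the flat, $\Z$-defined cup-product symplectic form $Q$; equivalently, $\H(2g-2)$ is the complement of the zero section of the tautological bundle $\O(-1)$ over $\P\H(2g-2)$, with projection $q$. One form, $Q$, governs both objects at stake: on the one hand $Q^g/g!$ is the integral volume form of $H^1(C,\Z)$, so $\nu$ equals a universal constant times $\big(\tfrac i2\big)^{2g}(Q^g/g!)\wedge\overline{(Q^g/g!)}$; on the other hand, in period coordinates $v$ one has $\tfrac i2\int_C\alpha\wedge\bar\alpha=\tfrac i2 Q(v,\bar v)=H(v,v)$, where $H$ is the Hodge--Riemann Hermitian form, of signature $(g,g)$, whose restriction to the line $\C\alpha\subset H^0(C,\omega_C)$ is precisely the natural metric on $\O(-1)$. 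Thus, locally, the projectivised period map $\P\H(2g-2)\to\P\big(H^1(C,\C)\big)$ is a local isomorphism onto an open set under which $(\O(1),h)$ becomes the pullback of $\O(1)$ on projective space with the Fubini--Study metric of $H$, and $\nu$ becomes the pullback of the Lebesgue form of $Q$.

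\textbf{Fibre integration.} Over a chart $W\subset\P\H(2g-2)$ with a holomorphic frame $s$ of $\O(-1)$, the total space is $\C^*_\zeta\times W$ with period coordinate $v=\zeta\,s(w)$, so that $\nu=|\zeta|^{4g-2}\,|D(w)|^2\,(\mathrm{Leb}_\zeta\otimes\mathrm{Leb}_w)$ with $D(w)=\det\big[\,s\mid\partial_1 s\mid\cdots\mid\partial_{2g-1}s\,\big]$, while $\{\tfrac i2\int\alpha\wedge\bar\alpha\leq1\}=\{|\zeta|\leq e^{\phi(w)}\}$ with $e^{-2\phi}=H(s,s)$. The fibrewise integral $\int_{|\zeta|\leq e^{\phi}}|\zeta|^{4g-2}\,\mathrm{Leb}_\zeta=\tfrac{\pi}{2g}\,e^{4g\phi}$ gives $\mathrm{Vol}\big(\H_{\leq1}(2g-2)\big)=\tfrac{\pi}{2g}\int_{\P\H(2g-2)}e^{4g\phi}\,|D(w)|^2\,\mathrm{Leb}_w$. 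On the other side $c_1\big(\O(1),h\big)=\tfrac1{2\pi i}\partial\bar\partial\log H(s,s)$, and the classical local expression for the top self-intersection of the Chern form of a tautological line bundle (the Fubini--Study volume form) reads $c_1\big(\O(1),h\big)^{2g-1}=C_g\,e^{4g\phi}\,|D(w)|^2\,\mathrm{Leb}_w$ with $C_g=\dfrac{(-1)^g(2g-1)!}{2\,(2\pi)^{2g-1}}$, the sign $(-1)^g$ recording the signature of $H$ and the hidden factor $2^{-2g}$ its Gram determinant relative to a symplectic $\Z$-basis. Substituting this and simplifying reproduces the displayed identity, hence \eqref{foragvg}.

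\textbf{The main obstacle.} Conceptually the load-bearing hypothesis is Assumption~\ref{assumption}: the Masur--Veech volume is an integral over a non-compact stack, and it is the good-metric property alone that converts it into the intersection number $\int_{\P\oH(2g-2)}\xi^{2g-1}$ on the compactification; everything else is geometric bookkeeping. On the computational side the genuine work is to keep every universal constant straight --- the disintegration factor $4g$, the fibre integral $\pi/2g$, the $\big(\tfrac i2\big)^{2g}$ of the integral lattice, and the sign $(-1)^g$ and the factor $2^{-2g}$ inside the indefinite Fubini--Study formula --- and to verify that their product is exactly $\tfrac{2(2\pi)^{2g}}{(2g-1)!}$.
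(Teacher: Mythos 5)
Your argument is correct and follows essentially the same route as the paper: Assumption~\ref{assumption} is used to convert $\int_{\P\oH(2g-2)}\xi^{2g-1}$ into the integral of the curvature form over the open stratum, and the identity then reduces to comparing $\omega^{2g-1}$ with the disintegration of the period-coordinate Lebesgue measure along the $\C^*$-fibres, which is exactly the content of Lemma~\ref{lem:relforms}. The one step you assert rather than derive is the signature-$(g,g)$ Fubini--Study formula $c_1(\O(1),h)^{2g-1}=\frac{(2g-1)!\,\det G}{\pi^{2g-1}}\,e^{4g\phi}\,|D(w)|^2\,\mathrm{Leb}_w$ (with $G$ the Gram matrix of $H$ in a unimodular symplectic basis, so $\det G=(-1)^g2^{-2g}$, which does recover your $C_g$ and makes all your constants come out consistently with \eqref{foragvg}); this determinant formula is precisely the computation the paper performs by reducing to a single point via the transitive action of $U(g,g)\cap U(2g)$ on the level sets $E_{a,b}$ followed by an explicit chart calculation, so you should either supply a reference covering the indefinite case or reproduce that reduction.
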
 

\begin{remark}
The assumption~\ref{assumption} should be proved soon. The authors of~\cite{BCGGM} have announced the existence of the desingularization of $\P\oH(\mu)$. This desingularization is essentialy obtained by a refinement of the data defining the stratification of $\P\oH(\mu)$.
\end{remark}

Besides, for a general $\mu$,  there exists a cohomology class $\beta\in H^{2(2g-3+n)}(\P\oH(\mu), \R)$ such that
$$
c_{\rm area}(\mu)=-\frac{1}{4\pi^2}\cdot \frac{\int_{\P\oH(\mu)} \delta_0\wedge\beta}{\int_{\P\oH(\mu)} \xi\wedge\beta}.
$$
In the specific case of $\mu=(2g-2)$, under the Assumption~\ref{assumption} the cocycle $\beta$ is a multiple of $\xi^{2g-2}$ (see~\cite{KonZor1}). Therefore, if we denote by 
$$
d_g=(-1)^{g-1} \int_{\P\oH(2g-2)} \delta_0 \xi^{2g-2},
$$
then we have the relation 
\begin{equation}\label{exprSV}
c_{\rm area}(2g-2)=\frac{d_g}{4\pi^2\, a_g}.
\end{equation} 
\begin{remark} Volumes and Siegel-Veech constants are positive, therefore $a_g$ and $d_g$ are positive (this explains our sign convention).
\end{remark}

\subsection{Statement of the results.}

We define the following formal series 
\begin{eqnarray*}
\F(t)&=& 1+ \sum_{g>0} (2g-1) a_g t^{2g},\\
\Delta(t)&=& \sum_{g>0} (2g-1) d_g t^{2g},\\
\mathcal{S}(t)&=&\frac{t/2}{{\rm sin}(t/2)}.
\end{eqnarray*} 
The main theorem of the paper is the following.
\begin{myth}\label{th:main}
For all $g>0$, we have 
\begin{equation}\label{mainfor}
[t^{2g}] \mathcal{S}(t)= \frac{1}{(2g)!} [t^{2g}] \mathcal{F}(t)^{2g}.
\end{equation}
and 
\begin{equation}\label{mainforbis}
[t^{2g-2}] \mathcal{S}(t)= \frac{2}{(2g-1)!} [t^{2g}] \left(\Delta\cdot \mathcal{F}(t)^{2g-1}\right).
\end{equation}
(where the notation $[t^n]$ stands for the $n$-th coefficient of the formal series).
\end{myth}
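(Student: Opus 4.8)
The plan is to reduce both identities in Theorem~\ref{th:main} to a single Hodge integral computation on $\oM_{g,1}$ via the explicit description of the canonical class $\xi$ on $\P\oH(2g-2)$. The first step is to identify $\P\oH(2g-2)$ inside $\P\oH_{g,1}$: a point of $\H(2g-2)$ is a curve with a differential $\alpha$ vanishing to order $2g-2$ at a single point, which we may mark; thus $\P\oH(2g-2)$ is (the closure of) the locus where the section $\sigma_1$ absorbs all the zeros, and on the smooth locus the line bundle $L=\O(1)$ restricted there is canonically $\L_1^{\otimes(2g-2)}$-related to the Hodge bundle. Concretely, pulling back the tautological section, $\xi$ should be expressible on $\P\oH(2g-2)$ in terms of $\psi_1$ and the $\lambda_i$'s, and the class of $\P\oH(2g-2)$ itself in $\P\oH_{g,1}$ should be a universal polynomial in $\xi,\psi_1,\lambda_i$. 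This turns $a_g=(-1)^g\int_{\P\oH(2g-2)}\xi^{2g-1}$ and $d_g=(-1)^{g-1}\int_{\P\oH(2g-2)}\delta_0\,\xi^{2g-2}$ into genuine Hodge integrals over $\oM_{g,1}$.

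The second step is to evaluate these Hodge integrals. Using the Mumford relation $\xi^g+\lambda_1\xi^{g-1}+\cdots+\lambda_g=0$ in $H^*(\P\oH_{g,1})$ together with the projection formula $p_*(\xi^{g-1+j})=s_j(\oH_{g,1})$ (Segre classes of the Hodge bundle), one reduces $\int_{\P\oH(2g-2)}\xi^{2g-1}$ to an integral over $\oM_{g,1}$ of a polynomial in $\psi_1$ and the Chern/Segre classes of the Hodge bundle. Here the key input is the Mumford relation $c(\oH)\,c(\oH^\vee)=1$, i.e. $\lambda(t)\lambda(-t)=1$, which controls the Segre classes, and the classical formula for $\lambda_g\lambda_{g-1}$-type integrals and for $\int_{\oM_{g,1}}\psi_1^{2g-2}\lambda_g=\text{(known)}$. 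Packaging the generating series, the factor $\mathcal S(t)=\frac{t/2}{\sin(t/2)}$ is exactly what emerges from the $\lambda$-class generating function: indeed $\prod(1-\lambda\text{-roots})$ assembled over all genera produces $\sin$-type series, and the shift by the $\psi_1^{2g-2}$ factor (dimension $2g-1$ on a space birational to $\P^{g-1}$-bundle over $\oM_{g,1}$) accounts for the power $\mathcal F(t)^{2g}$ via Lagrange inversion. For \eqref{mainforbis}, inserting $\delta_0$ corresponds to restricting to the boundary divisor of irreducible nodal curves, i.e. to a pushforward from $\oM_{g-1,3}$ (or $\oM_{g-1,2}$ with a gluing), and the same Hodge-integral machinery produces $\Delta\cdot\mathcal F^{2g-1}$ with the $2/(2g-1)!$ normalization matching the factorials in $a_g,d_g$ versus the dimension count.

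The third step is the combinatorial extraction: once $\mathcal F(t)$ and $\Delta(t)$ are identified with explicit Hodge-integral generating functions, one applies Lagrange–Bürmann inversion to the functional equation satisfied by $\mathcal F$. The shape $[t^{2g}]\mathcal S(t)=\frac{1}{(2g)!}[t^{2g}]\mathcal F(t)^{2g}$ is the hallmark of $t\mapsto$ (compositional inverse related to) $\mathcal S$, and \eqref{mainforbis} is its derivative/variation formula obtained by differentiating the inversion identity and matching $\Delta=\sum(2g-1)d_g t^{2g}$ against the derivative of $\mathcal F$. The main obstacle I anticipate is the first step: pinning down the precise class of $\P\oH(2g-2)$ in $\P\oH_{g,1}$ and the exact relation between $\xi$, $\psi_1$ and the Hodge classes on it, including the contributions of the boundary strata in the BCGGM compactification, and in particular justifying (under Assumption~\ref{assumption}) that the good-metric hypothesis lets one compute $\int_{\P\oH(2g-2)}\xi^{2g-1}$ as an honest intersection number with no boundary correction. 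Once that geometric input is secured, the Hodge-integral evaluation is a known (if intricate) computation, and the series manipulation is formal.
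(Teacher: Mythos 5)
There is a genuine gap at the heart of your outline: the mechanism that produces the power $\mathcal F(t)^{2g}$ is not Lagrange--B\"urmann inversion, and the premise you use to set it up is false. The class of $\oA_g(2g-2)$ in $H^*(\P\oH_{g,1})$ is \emph{not} a universal polynomial in $\xi$, $\psi_1$ and the $\lambda_i$'s; if it were, $a_g$ would reduce to a single Hodge integral and no convolution of the $a_{g'}$'s could appear. The paper instead computes these classes by the induction formula of \cite{Sau} (Proposition~\ref{induction}), $(k+1)\psi_1\cdot\alpha_g^0(k)=\alpha_g^0(k+1)+\sum_m \frac{1}{m!}\sum_{\underline g,I} m(I)\,\alpha^0_{\underline g,k,I}$, whose correction terms are pushforwards from boundary loci indexed by star-shaped twisted graphs $\underline g=(g_0,g_1,\dots,g_m)$. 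After multiplying by $\lambda_g$, Mumford's relation $\lambda_{g_0}^2=0$ kills every graph with $g_0>0$; the genus-$0$ lemma and the string equation evaluate the central vertex; each outer leg contributes a factor $(2g_j-1)a_{g_j}$ via $\int_{\oM_{g_j,1}}\lambda_{g_j}\alpha^0_{g_j}(2g_j-2)$; and summing the resulting telescoping relation over $k=0,\dots,2g-3$ yields $b_g=\sum_{k}\frac{1}{k!(2g-k)!}\sum_{g_1+\cdots+g_k=g}\prod_j(2g_j-1)a_{g_j}$, which is exactly $[t^{2g}]\mathcal S=\frac{1}{(2g)!}[t^{2g}]\F^{2g}$. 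The exponent $2g$ records the accumulated factors $\frac{(2g-2)!}{k!(2g-k)!}$ coming from the number of legs and the powers of $\psi_1$, not a compositional inverse; no functional equation for $\F$ is ever established or needed. Without the degeneration formula (or an equivalent recursion for the stratum classes), your step two cannot close, and step three has nothing to invert.

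The ingredients you do identify correctly --- the reduction to $n=1$ from $\lambda_{j}=0$ for $j>g$, the Faber--Pandharipande $\lambda_g$-conjecture supplying $\mathcal S(t)=1+\sum b_gt^{2g}$, and the identity $\int_{\oM_{g,1}}\delta_0\lambda_{g-1}\psi_1^{2g-2}=\tfrac12 b_{g-1}$ for the Siegel--Veech side --- all appear in the paper, but only after the induction formula has done the real work; for \eqref{mainforbis} the factor $\Delta$ arises because $\delta_0\lambda_{g-1}$ distributes over the legs of the star graph, replacing exactly one factor $(2g_j-1)a_{g_j}$ by $(2g_j-1)d_{g_j}$. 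Note finally that Assumption~\ref{assumption} plays no role in Theorem~\ref{th:main}, which is a purely algebro-geometric statement about intersection numbers; the good-metric hypothesis is needed only in Section~\ref{sec:volint} to identify $a_g$ with the Masur--Veech volume.
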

In particular, the above equality implies that the $a_g$'s and $d_g$'s can be computed inductively using the coefficients of $\mathcal{S}$.

%\begin{remark} Various examples of generating functions of intersection numbers in moduli spaces of curves are derived from $\mathcal{S}$ (see~\cite{FabPan},~\cite{BurShaSpiZvo}, and~\cite{FabPan1} for example).  However, none of the known examples is simply related to the formal series $\mathcal{F}$.
%\end{remark}

\subsection{Asymptotic behavior for large genera}

In the past few years, algebraic geometers started to study the large genus asymptotic behavior of numerical invariants associated to moduli spaces of curves. For example, M. Mirzakhani and P. Zograf  identified the large genus asymptotics of the Weil-Petersson volumes (see~\cite{MirZog}).  For strata of differentials, A. Eskin and A. Zorich proposed the following conjectures.
\begin{myconj}[see~\cite{EskZor}]\label{conj2}
Volumes of strata satisfy
$${{\rm Vol}(k_1,\ldots, k_n)}= \frac{4}{(k_1+1)(k_2+1)\ldots (k_n+1)} (1+ \epsilon_1(\mu) )$$ 
where $\underset{g\to \infty}{\lim}\left( \underset{\mu\vdash 2g-2}{\rm max} |\epsilon_1(\mu)| \right)=0$.
\end{myconj}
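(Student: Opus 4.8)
\emph{Scope.} As the abstract indicates, the full conjecture is out of reach of the present techniques; I will prove it in the extreme case $\mu=(2g-2)$ (a single zero, $n=1$), which is exactly the case made accessible by Proposition~\ref{vgag} and Theorem~\ref{th:main}. Here the asserted value is ${\rm Vol}(2g-2)=\frac{4}{2g-1}(1+\epsilon_1)$, so by Proposition~\ref{vgag} the statement is equivalent to the single-sequence asymptotic
\[
a_g=\frac{2\,(2g-2)!}{(2\pi)^{2g}}\,\bigl(1+o(1)\bigr),\qquad g\to\infty .
\]
Thus the plan is to extract this asymptotic for $a_g$ from the formal identity \eqref{mainfor} of Theorem~\ref{th:main}, namely $(2g)!\,[t^{2g}]\mathcal{S}(t)=[t^{2g}]\mathcal{F}(t)^{2g}$.

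\emph{Left-hand side.} First I would pin down the asymptotics of $[t^{2g}]\mathcal{S}(t)$ by singularity analysis. The function $\mathcal{S}(t)=\tfrac{t/2}{\sin(t/2)}$ is meromorphic with simple poles at $t=2\pi k$, $k\in\Z\setminus\{0\}$; the two nearest the origin, at $t=\pm2\pi$, dominate. Near $t=2\pi$ one has $\mathcal{S}(t)\sim\frac{-2\pi}{t-2\pi}=\frac{2\pi}{2\pi-t}$, whose Taylor coefficients are $(2\pi)^{-k}$; combining the two conjugate poles and bounding the remaining poles (the next ones at $\pm4\pi$ contribute only $(4\pi)^{-2g}$) gives $[t^{2g}]\mathcal{S}(t)=2\,(2\pi)^{-2g}\bigl(1+O(2^{-2g})\bigr)$. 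Hence the left side of \eqref{mainfor} is $\sim 2\,(2g)!\,(2\pi)^{-2g}$.

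\emph{Right-hand side and dominant balance.} Writing $\mathcal{F}(t)=1+\sum_{h\ge1}c_h t^{2h}$ with $c_h=(2h-1)a_h$, the multinomial expansion gives
\[
[t^{2g}]\mathcal{F}(t)^{2g}=\sum_{\sum_h h\,m_h=g}\frac{(2g)!}{\bigl(2g-\sum_h m_h\bigr)!\,\prod_h m_h!}\ \prod_h c_h^{\,m_h}.
\]
The term with a single part $m_g=1$ equals $2g\,c_g=2g(2g-1)a_g$, and matching it against the left side yields precisely $2g(2g-1)a_g\sim 2\,(2g)!\,(2\pi)^{-2g}$, i.e. the target asymptotic for $a_g$. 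So the entire problem reduces to showing that every other multi-index contributes negligibly. Heuristically this is a ``one–big–jump'' phenomenon: since (conjecturally) the $c_h$ grow factorially, concentrating the weight $g$ into the single largest part beats any splitting into two or more large parts (e.g. the index $m_{g-1}=m_1=1$ is smaller by a factor $\asymp 1/g$), while configurations made of many small parts are only exponentially large and are crushed by the factorial scale of the dominant term.

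\emph{Main obstacle.} Making this tail estimate rigorous is the crux, and the difficulty is genuine: because $a_h$ itself grows factorially, the ``lower-order'' convolution terms are not manifestly small and must be controlled uniformly. I would proceed by a bootstrap/strong induction, positing bounds of the form $a_h\le K\,\frac{2(2h-2)!}{(2\pi)^{2h}}$, inserting them into the expansion above, and verifying that they propagate while the cross-terms sum to a quantity that is $o$ of the diagonal term; the delicate point is the log-convexity-type control of the ratios $c_h/c_{h-1}$ needed to bound the multinomial sum geometrically. An alternative, possibly cleaner, route is to pass to a Borel-type transform $\sum_h \tfrac{c_h}{(2h-1)!}t^{2h}$, which tames the factorial growth into a series of radius $2\pi$; the identity \eqref{mainfor} should then translate into an analytic functional equation near $t=2\pi$ to which standard transfer theorems apply, directly reading off the constant $2$ and the exponent from the pole of $\mathcal{S}$. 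Either way, once $a_g\sim\frac{2(2g-2)!}{(2\pi)^{2g}}$ is established, Proposition~\ref{vgag} delivers ${\rm Vol}(2g-2)\to\frac{4}{2g-1}$, i.e. Conjecture~\ref{conj2} for $\mu=(2g-2)$; the general $n$ case remains open.
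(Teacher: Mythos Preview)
Your overall architecture matches the paper's: reduce to $a_g\sim\frac{2(2g-2)!}{(2\pi)^{2g}}$, identify the diagonal term $2g(2g-1)a_g$ in the expansion of $[t^{2g}]\mathcal{F}(t)^{2g}$, and show the remaining convolution terms are negligible. Your asymptotic for $[t^{2g}]\mathcal{S}(t)$ via pole analysis is correct and equivalent to what the paper does with Bernoulli numbers.

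However, you have not actually closed the tail estimate, and the methods you propose (bootstrap on $a_h\le K\cdot\frac{2(2h-2)!}{(2\pi)^{2h}}$, log-convexity of the ratios, or a Borel transform with transfer theorems) are more elaborate than what is needed. The paper's key observation, which you are missing, is that \emph{positivity of the $a_h$} (volumes are positive, hence $a_h>0$ by the sign convention) gives the required a priori bound for free. Indeed, Theorem~\ref{th:main} rewritten as
\[
(2g-2)!\,b_g \;=\; a_g \;+\; \sum_{k\ge 2}\frac{(2g-2)!}{k!(2g-k)!}\sum_{g_1+\cdots+g_k=g}\prod_{j=1}^{k}(2g_j-1)a_{g_j}
\]
has all terms on the right nonnegative, so $a_h\le (2h-2)!\,b_h$ for \emph{every} $h$ with no induction or constant to adjust. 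Feeding this bound back in, the tail is dominated by
\[
\sum_{k\ge 2}\frac{(2g-2)!}{k!(2g-k)!}\sum_{g_1+\cdots+g_k=g}\prod_{j=1}^{k}(2g_j-1)!\,b_{g_j},
\]
and this is handled by a single elementary convolution lemma (the paper's Lemma~\ref{lem:asympt2}): with $B'_h=(2h-1)!\,b_h$ one has $\sum_{g_1+\cdots+g_k=g}\prod B'_{g_j}\le C^{k-1}B'_{g-k+1}$, which makes the whole tail $O\bigl((2g-2)!\,b_g/g\bigr)$ after a short computation. No log-convexity, no Borel transform, no transfer theorem is required.

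In short: your plan is on the right track and would likely succeed, but the ``main obstacle'' you flag dissolves once you notice that positivity of the $a_h$ already delivers the uniform upper bound you were trying to bootstrap.
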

\begin{myconj}\label{conj3}
We have
$$c_{\rm area}(\mu)=\frac{1}{2} +\epsilon_2(\mu)$$
where $\underset{g\to \infty}{\lim}\left( \underset{\mu\vdash 2g-2}{\rm max} |\epsilon_2(\mu)| \right)=0$.
\end{myconj}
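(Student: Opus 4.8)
The tools assembled above prove this asymptotic for the minimal stratum $\mu=(2g-2)$, i.e. they establish that $\epsilon_2((2g-2))\to 0$; this one-zero case is the range in which $a_g$, $d_g$ and Theorem~\ref{th:main} apply, and it is the case treated in the present paper. The plan is to feed the exact generating-function identities of Theorem~\ref{th:main} into a singularity analysis. By~\eqref{exprSV} we have the exact identity $c_{\rm area}(2g-2)=d_g/(4\pi^2 a_g)$, so it suffices to prove that $d_g/a_g\to 2\pi^2$ as $g\to\infty$. Writing $c_m=[t^m]\mathcal{S}(t)$, the whole computation is driven by one elementary analytic fact: $\mathcal{S}(t)=\frac{t/2}{\sin(t/2)}$ extends to an even meromorphic function whose singularities nearest the origin are the simple poles at $t=\pm 2\pi$, with residues of equal modulus. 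Extracting these two dominant poles gives $c_{2g}\sim 2(2\pi)^{-2g}$ and, more to the point, convergence of the ratio of consecutive even coefficients, $c_{2g-2}/c_{2g}\to(2\pi)^2$.

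Next I would extract the leading asymptotics of $a_g$ and $d_g$ from~\eqref{mainfor} and~\eqref{mainforbis}. Set $f_h=(2h-1)a_h=[t^{2h}]\mathcal{F}(t)$ and $e_h=(2h-1)d_h=[t^{2h}]\Delta(t)$. In the expansion of $\mathcal{F}(t)^{2g}$ the coefficient of $t^{2g}$ splits as a ``linear'' contribution $2g\,f_g$ (one factor supplies $f_g t^{2g}$, the remaining $2g-1$ factors supply $1$) plus a remainder $R_g$ collecting all products of at least two nontrivial factors; thus \eqref{mainfor} reads $2g(2g-1)a_g=(2g)!\,c_{2g}-R_g$. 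Likewise, isolating in $\Delta\cdot\mathcal{F}(t)^{2g-1}$ the term where $\Delta$ supplies the full $t^{2g}$ yields $(2g-1)d_g=\tfrac{(2g-1)!}{2}\,c_{2g-2}-R'_g$. If the remainders $R_g,R'_g$ turn out to be of lower order, one obtains $a_g\sim(2g-2)!\,c_{2g}$ and $d_g\sim\tfrac12(2g-2)!\,c_{2g-2}$, whence $d_g/a_g\sim c_{2g-2}/(2c_{2g})\to(2\pi)^2/2=2\pi^2$ and $c_{\rm area}(2g-2)\to 1/2$.

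The main obstacle is precisely the control of the convolution remainders $R_g$ and $R'_g$, which is delicate because $f_h$ and $e_h$ grow factorially, so a priori a single term of the convolution could dominate. I would handle this by a bootstrap: first establish two-sided bounds $f_h\asymp (2h-1)!\,(2\pi)^{-2h}$ and $e_h\asymp (2h-1)!\,(2\pi)^{-2h}$ by induction on $h$ using the identities themselves, and then estimate the sums defining $R_g,R'_g$. The key point is that $h\mapsto\log\big((2h)!\big)$ is convex, so sums of the shape $\sum_{h}\binom{2g}{j}\,(2h-1)!\,(2(g-h)-1)!$ are dominated by their boundary terms $h=O(1)$, each of which is smaller than the linear term by a factor $O(1/g)$; summing over the $O(g)$ terms still gives $R_g/\big(g(2g-1)a_g\big)=O(1/g)$, and similarly for $R'_g$. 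Feeding these estimates back closes the induction and upgrades the heuristic asymptotics above to genuine ones, completing the proof for $\mu=(2g-2)$. I do not expect this argument to reach the uniform statement over all $\mu\vdash 2g-2$: Theorem~\ref{th:main}, the relation~\eqref{exprSV}, and~\eqref{foragvg} are specific to the one-zero stratum, so the general conjecture would require the analogous Hodge-integral identities for arbitrary $\mu$, which lie beyond the present tools.
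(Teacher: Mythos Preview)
Your proposal is correct for the minimal stratum $\mu=(2g-2)$ and follows essentially the same route as Section~\ref{sec:asympt}: isolate the ``linear'' term in the identities of Theorem~\ref{th:main}, then bound the convolution remainder using the factorial growth of $(2h-1)!/(2\pi)^{2h}$ (the paper's Lemma~\ref{lem:asympt2} is precisely your convexity estimate on sums of products of factorials). You also correctly flag that the uniform statement over all $\mu\vdash 2g-2$ is beyond the present tools.

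One difference worth noting: where you propose an inductive bootstrap to obtain two-sided bounds $f_h\asymp(2h-1)!\,(2\pi)^{-2h}$ and $e_h\asymp(2h-1)!\,(2\pi)^{-2h}$, the paper shortcuts the upper-bound half by invoking the positivity of $a_g$ and $d_g$ (they are, up to normalization, a volume and a Siegel--Veech constant). Since every summand in the identities of Theorems~\ref{th:reform1} and~\ref{th:reform2} is nonnegative, keeping only the $k=1$ (respectively $g'=g$, $k=0$) term gives at once $a_g\le(2g-2)!\,b_g$ and $2d_g\le(2g-2)!\,b_{g-1}$ for every $g$. These uniform a priori bounds are then plugged directly into the remainder (Lemma~\ref{lem:asympt1}), so no bootstrap is needed. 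Your inductive scheme would also go through, but closing it requires keeping track of the constants to ensure they do not drift; positivity hands you the upper bound for free and makes the argument cleaner.
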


We will use the induction Formulas~\eqref{mainfor} and~\eqref{mainforbis}  to compute the asymptotic expansion of $a_g$'s and $d_g$'s. Then, using Formulas~\eqref{foragvg} and~\eqref{exprSV}, we will deduce the following result.
\begin{myth}\label{th:asympt}
For all $R\geq 1$, we have
$$
\frac{(2g-2)}{4} {\rm Vol}(2g-2)=  1- \frac{\pi^2}{12g} + \frac{24 \pi^2-\pi^4}{288g^2}+\ldots  +  \frac{c_R}{g^R} +O\left(\frac{1}{g^{R+1}}\right),
$$
and 
$$c_{\rm area}(2g-2)= \frac{1}{2}- \frac{1}{4g} +\ldots  +  \frac{c_R'}{g^R} +O\left(\frac{1}{g^{R+1}} \right),
$$
where the coefficients $c_k$ and $c_k'$ lie in $\Q[\pi^2]$ and can be effectively computed. In particular conjectures~\ref{conj2},~and~\ref{conj3} hold for $\H(2g-2)$.
\end{myth}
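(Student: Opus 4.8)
The plan is to turn the induction formulas \eqref{mainfor} and \eqref{mainforbis} of Theorem~\ref{th:main} into full asymptotic expansions of $a_g$ and $d_g$ in powers of $1/g$, and then to transport these through Proposition~\ref{vgag} and the relation \eqref{exprSV}. The first ingredient is the explicit form of $b_g:=[t^{2g}]\mathcal{S}(t)$. Since $\mathcal{S}(t)=\frac{t/2}{\sin(t/2)}$ is meromorphic with only simple poles, located at $t\in 2\pi\Z\setminus\{0\}$, summing the polar parts gives
$$
b_g=\frac{2}{(2\pi)^{2g}}\,\eta(2g),\qquad \eta(s)=\sum_{k\ge 1}\frac{(-1)^{k+1}}{k^{s}}=(1-2^{1-s})\zeta(s),
$$
so that $b_g=\frac{2}{(2\pi)^{2g}}\bigl(1+O(2^{-2g})\bigr)$: up to an error exponentially small in $g$ --- hence invisible to any expansion in powers of $1/g$ --- the sequence $b_g$ is a pure geometric progression, and likewise $[t^{2g-2}]\mathcal{S}(t)=b_{g-1}$.

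The core of the argument is the asymptotic analysis of $a_g$. Writing $\mathcal{F}=1+G$ with $G=\sum_{m\ge 1}(2m-1)a_m t^{2m}$, one expands
$$
(2g)!\,b_g=[t^{2g}]\mathcal{F}(t)^{2g}=\sum_{j=1}^{g}\binom{2g}{j}\,[t^{2g}]\,G^{j};
$$
the term $j=1$ equals $2g(2g-1)a_g$, which isolates $a_g$:
$$
a_g=(2g-2)!\,b_g-\frac{1}{2g(2g-1)}\sum_{j=2}^{g}\binom{2g}{j}\,[t^{2g}]\,G^{j}.
$$
I would then prove, by induction on the order $R$, an expansion
$$
a_g=\frac{2\,(2g-2)!}{(2\pi)^{2g}}\Bigl(1+\sum_{k=1}^{R}\frac{\gamma_k}{g^{k}}+O(g^{-R-1})\Bigr),\qquad \gamma_k\in\Q[\pi^2],
$$
with effectively computable coefficients. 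The mechanism: in $[t^{2g}]\,G^{j}=\sum\prod_i(2m_i-1)a_{m_i}$ (the sum over compositions of $g$ into $j$ positive parts), the dominant compositions as $g\to\infty$ are those with exactly one large part --- to which the inductive estimate applies --- and $j-1$ bounded parts, which contribute rational numbers such as $a_1=\tfrac{1}{24}$, $a_2=\tfrac{1}{640}$, $\dots$; consequently the $j$-th summand enters $a_g$ only at relative order $g^{-(j-1)}$, and the finer compositions (two, three, $\dots$ large parts) supply the remaining corrections, the whole hierarchy being summable in $1/g$ with a uniformly controlled remainder. In particular $a_g=(2g-2)!\,b_g-(2g-3)\,a_1 a_{g-1}+O\!\bigl((2g-4)!\,(2\pi)^{-2g}\bigr)$, whence $\gamma_1=-\pi^2/12$. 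It is convenient to record the Lagrange inversion reformulation: if $w=w(s)$ is the odd formal power series with $w=s\,\mathcal{F}(w)$, then $[t^{2g}]\mathcal{F}(t)^{2g}=2g\,[s^{2g}]\log\!\bigl(w(s)/s\bigr)$, so \eqref{mainfor} says exactly $\mathcal{F}\!\bigl(w(s)\bigr)=\exp\!\Bigl(\sum_{m\ge 1}(2m-1)!\,b_m\,s^{2m}\Bigr)$, which displays the combinatorics transparently --- although all these series diverge factorially, so this only reorganizes the recursion and the asymptotic extraction still proceeds as above.

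The treatment of $d_g$ is the same, but linear in $\Delta$: equation \eqref{mainforbis} reads
$$
(2g-1)\,d_g=\frac{(2g-1)!}{2}\,b_{g-1}-\sum_{h=1}^{g-1}(2h-1)\,d_h\,[t^{2g-2h}]\mathcal{F}(t)^{2g-1},
$$
and feeding in the expansion of $a_g$ (hence of the coefficients $[t^{2m}]\mathcal{F}^{N}$) together with an induction as above gives
$$
d_g=\frac{(2g-2)!}{(2\pi)^{2g-2}}\Bigl(1+\sum_{k=1}^{R}\frac{\delta_k}{g^{k}}+O(g^{-R-1})\Bigr),\qquad \delta_k\in\Q[\pi^2],
$$
the first correction being governed by $d_1=\tfrac{1}{2}$ and $a_1=\tfrac{1}{24}$. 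Finally I would substitute. By Proposition~\ref{vgag},
$$
{\rm Vol}(2g-2)=\frac{2(2\pi)^{2g}}{(2g-1)!}\,a_g=\frac{4}{2g-1}\Bigl(1+\sum_{k\ge 1}\frac{\gamma_k}{g^{k}}+\cdots\Bigr),
$$
so multiplying by the normalising factor of the statement and re-expanding in powers of $1/g$ yields the asserted series with $c_k\in\Q[\pi^2]$ (the first coefficient traceable to $\gamma_1=-\pi^2/12$, i.e. to $a_1=\tfrac{1}{24}$). Similarly, by \eqref{exprSV},
$$
c_{\rm area}(2g-2)=\frac{d_g}{4\pi^2\,a_g}=\frac12\Bigl(1+\sum_{k\ge 1}\frac{\delta_k-\gamma_k}{g^{k}}+\cdots\Bigr)=\frac12-\frac{1}{4g}+\cdots,
$$
with $c'_k\in\Q[\pi^2]$. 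Since the leading terms are exactly ${\rm Vol}(2g-2)\sim\tfrac{4}{2g-1}$ and $c_{\rm area}(2g-2)\to\tfrac12$, the errors $\epsilon_1$ and $\epsilon_2$ of Conjectures~\ref{conj2} and~\ref{conj3} are $O(1/g)$ on the stratum $\mu=(2g-2)$, which establishes both conjectures for $\mathcal{H}(2g-2)$.

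The main obstacle is the asymptotic analysis of the nonlinear recursion \eqref{mainfor} (and of \eqref{mainforbis}): one must justify the all-orders $1/g$-expansion with a uniform control of the convolution sums $\sum_{j}\binom{2g}{j}[t^{2g}]\,G^{j}$ --- that is, an exact accounting of which compositions of $g$ contribute at each order in $1/g$, together with uniform bounds on the remainders that can be propagated through the induction on $R$. The Lagrange inversion identity clarifies the structure but does not remove this difficulty, since the series involved have zero radius of convergence; I would therefore carry the whole argument out at the level of asymptotic expansions with explicit error terms. Everything else --- evaluating $b_g$, performing the substitutions, and reading off the leading terms --- is routine.
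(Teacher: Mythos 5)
Your proposal follows essentially the same route as the paper: both isolate $a_g$ (resp.\ $d_g$) from the recursions \eqref{mainfor} and \eqref{mainforbis}, observe that only compositions of $g$ with a single large part and boundedly many small parts contribute at each order in $1/g$ (Bender's method, which the paper invokes explicitly), invert the resulting triangular relations to get all-orders expansions of $a_g$ and $d_g$ with coefficients in $\Q[\pi^2]$, and then substitute via Proposition~\ref{vgag} and \eqref{exprSV}. The one step you defer --- the uniform control of the convolution sums $\sum_j\binom{2g}{j}[t^{2g}]G^j$ --- is precisely the paper's Lemma~\ref{lem:asympt2} (the bound $\sum_{g_1+\cdots+g_k=g}\prod_j B'_{g_j}\le C^{k-1}B'_{g-k+1}$, proved by a short induction on $k$), combined with the leading-order estimate of Lemma~\ref{lem:asympt1}.
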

\begin{remark}
For $g\geq 4$, the space $\H(2g-2)$ has three connected components: {\em hyperelliptic}, {\em odd} and {\em even} (see~\cite{KonZor}). Our formulas do not separate the volumes and Siegel-Veech constants of these three components. The volume is the sum of volumes of the connected components and the Siegel-Veech constant is a mean of the Siegel-veech constants of the connected components that is weighted by the volumes. 

 The hyperelliptic component has an explicitly computable volume and Siegel-Veech constants (see~\cite{AtrEskZor}). We will check that the volume of the hyperelliptic component is asymptotically negligible in comparison to the total volume. It remains to separate the volumes (and Siegel-Veech constants) of the odd and even  components. These are conjectured to be equivalent as $g$ goes to infinity (see~\cite{EskZor}).
\end{remark}

\subsection{Some comments on Theorem~\ref{th:asympt}}  In~\cite{CheMoeZag}, D. Chen, M. M\"oller and D. Zagier computed the asymptotic expansion of volumes and Siegel-Veech constants of strata of differentials with simple zeros:
$$
{\rm Vol}\big(\underset{2g-2}{\underbrace{1,1,\ldots,1}}\big) \underset{g\to +\infty}{\sim} \frac{1}{4^{g-1}}\left(1-\frac{\pi^{2}}{24g}+\frac{60\pi^2-\pi^4}{1152 g^2}+\ldots  \right), 
$$
and
$$  c_{\rm area}(1,\ldots,1)=\frac{1}{2}-\frac{1}{8g}
%-\frac{5}{64g^2}-\frac{4\pi^2+75}{768 g^3}
+\ldots.
$$
Therefore  Conjectures~\ref{conj2} and~\ref{conj3} also hold for these strata.

Their proof relies on a careful analysis of the formula of Eskin and Okounkov for volumes of strata. The main ingredient of this formula is the generating function of the number of ramified coverings of the punctured torus. These formal series are quasi-modular forms and the volumes (and Siegel-Veech constants) of strata are expressed using the asymptotic of the coefficients (see~\cite{EskOko}).

By numerical experiments, one observes that, for a fixed value of $g$, $|\epsilon_1(\mu)|$ and $|\epsilon_2(\mu)|$ are maximal for $\mu=(2g-2)$ and minimal for $\mu=(1,\ldots,1)$ (see Figure 1). 
\begin{center}
\begin{figure}[h]
\includegraphics[scale=0.36]{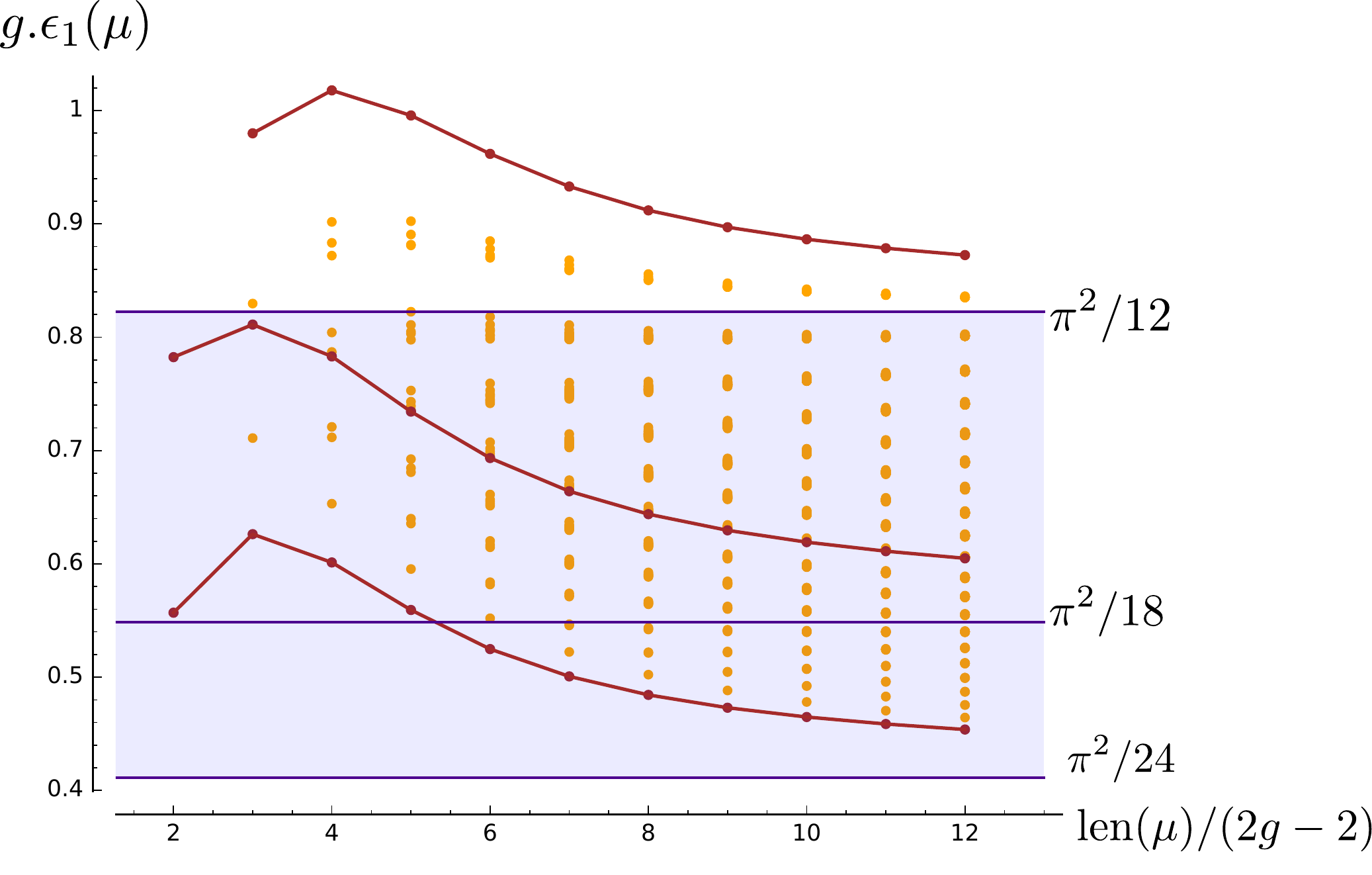} 
\caption{Absolute value of $g.\epsilon_1(\mu)$ in function of $g$. The broken lines correspond respectively to $\mu=(2g-2), (2,2,\ldots,2)$, and $(1,\ldots,1)$.}
\end{figure}
\end{center}

We can observe that the dominating term of $\epsilon_1(2g-2)$ is  $-\pi^2/12g$ which is twice the leading term in the expansion of $\epsilon_1(1,\ldots,1)$ (the same holds for $\epsilon_2$). This leads us to the following straightening of the conjecture of Eskin and Zorich (see Figure~2).
\begin{myconj}
The functions $\epsilon_1$ and $\epsilon_2$ satisfy
\begin{eqnarray*}
-\epsilon_1(\mu) &=& \frac{\pi^2 }{6 \; {\rm dim}(\H(\mu))} (1 +\epsilon_1'(\mu)), \\
-\epsilon_2(\mu) &=& \frac{1}{2 \; {\rm dim}(\H(\mu))} (1 +\epsilon_2'(\mu)),
\end{eqnarray*}
where $\epsilon_1'$ and and $\epsilon_2'$ tends uniformly to $0$ as $g$ goes to infinity.
\end{myconj}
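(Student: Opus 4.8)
The plan is to treat this conjecture as a single uniform-in-$\mu$ refinement that interpolates between the two extreme cases already understood: the minimal stratum $\mu=(2g-2)$ treated here via Hodge integrals, and the principal stratum $\mu=(1,\ldots,1)$ treated in~\cite{CheMoeZag}. The two endpoints already confirm the proposed shape and fix the constants. For $\mu=(2g-2)$ one has ${\rm dim}(\H(\mu))=2g$, so $\frac{\pi^2}{6\,{\rm dim}(\H(\mu))}=\frac{\pi^2}{12g}$ and $\frac{1}{2\,{\rm dim}(\H(\mu))}=\frac{1}{4g}$, which are exactly the leading corrections appearing in Theorem~\ref{th:asympt}; for $\mu=(1,\ldots,1)$ one has ${\rm dim}(\H(\mu))=4g-3$, and $\frac{\pi^2}{6\,{\rm dim}}\sim\frac{\pi^2}{24g}$, $\frac{1}{2\,{\rm dim}}\sim\frac{1}{8g}$ match the expansions of~\cite{CheMoeZag}. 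The task is therefore to produce one asymptotic scheme, valid for all partitions simultaneously, whose first subleading term is governed by $1/{\rm dim}(\H(\mu))=1/(2g-1+n)$ with universal numerators $\frac{\pi^2}{6}$ and $\frac12$.

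First I would work from the Eskin--Okounkov exact formula for ${\rm Vol}(\mu)$~\cite{EskOko}, which expresses the volume of an arbitrary stratum through coefficients of explicit quasi-modular forms indexed by the orders $k_1,\ldots,k_n$. The CMZ analysis~\cite{CheMoeZag} extracts the large-genus asymptotics of these coefficients: the leading term reproduces the Eskin--Zorich main term $\frac{4}{\prod_{i=1}^{n}(k_i+1)}$, and $\epsilon_1(\mu)$ is read off from the first subleading coefficient. The structural claim to be proved is that this subleading coefficient, uniformly in $\mu$, assembles into a sum of one contribution per zero, so that after normalizing by the main term it equals $-\frac{\pi^2}{6}\cdot\frac{1}{2g-1+n}\,(1+o(1))$. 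Concretely I expect each zero of order $k_i$ to contribute a local factor governed by the same combination $\frac{k_i(k_i+2)}{k_i+1}$ that already appears in the definition of $c_{\rm area}(\mu)$, and that pooling these local contributions over the full set of $2g-1+n$ period directions produces precisely the denominator ${\rm dim}(\H(\mu))$.

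For the Siegel--Veech constant I would run the parallel argument, using either the cohomological expression $c_{\rm area}(\mu)=-\frac{1}{4\pi^2}\cdot\frac{\int_{\P\oH(\mu)}\delta_0\wedge\beta}{\int_{\P\oH(\mu)}\xi\wedge\beta}$ recorded in the excerpt, or the Siegel--Veech refinement of the Eskin--Okounkov generating series. The same asymptotic scheme should then give $\epsilon_2(\mu)=-\frac{1}{2(2g-1+n)}(1+o(1))$: the value $\tfrac12$ of Conjecture~\ref{conj3} is the common limit, and the $1/{\rm dim}$ scaling of the correction has the same combinatorial origin as for volumes. In the minimal stratum the two quantities are tied together through Formulas~\eqref{foragvg} and~\eqref{exprSV}, and I would use that link as a consistency check fixing the normalizing constants $\frac{\pi^2}{6}$ and $\frac12$ before attempting the general case.

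The hard part will be the uniformity of the error, that is, $\epsilon_1'(\mu),\epsilon_2'(\mu)\to 0$ over all partitions $\mu\vdash 2g-2$ at once. The CMZ asymptotic expansions are established for fixed shapes of $\mu$, and the implied constants must be controlled simultaneously while both the number of parts $n$ and the individual orders $k_i$ are allowed to grow with $g$. The most delicate regime is the transition between ``few large zeros'' (near the minimal stratum) and ``many small zeros'' (near the principal stratum), where the quasi-modular coefficients exhibit competing scales; obtaining second-order remainder bounds that are uniform across this entire range, rather than only at the two endpoints, is the principal obstacle, and would likely require a refinement of the saddle-point and transfer-operator estimates underlying~\cite{EskOko} and~\cite{CheMoeZag}.
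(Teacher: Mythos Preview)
The statement you are attempting to prove is stated in the paper as a \emph{conjecture}, not a theorem: there is no proof in the paper to compare against. The paper motivates the conjecture exactly as you do in your first paragraph---by observing that the leading correction term for $\mu=(2g-2)$ is $-\pi^2/12g$ (from Theorem~\ref{th:asympt}) while for $\mu=(1,\ldots,1)$ it is $-\pi^2/24g$ (from~\cite{CheMoeZag}), and that both are instances of $-\pi^2/(6\,\dim\H(\mu))$---together with the numerical evidence in Figures~1 and~2. That is the full extent of the paper's treatment.

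Your proposal is not a proof but a research outline, and you are candid about this: you write ``I would work from\ldots'', ``I expect\ldots'', ``should then give\ldots'', and you explicitly flag the uniformity of the error as ``the principal obstacle''. This is an honest assessment. The gap is genuine and you have named it yourself: the CMZ analysis in~\cite{CheMoeZag} is carried out for the principal stratum, and extending it uniformly across all partitions $\mu\vdash 2g-2$---in particular controlling the implied constants when both the number of parts and the part sizes vary with $g$---is not a matter of routine bookkeeping but the substance of the open problem. Your heuristic that each zero contributes a local factor of the shape $k_i(k_i+2)/(k_i+1)$ and that these pool to give the $1/\dim$ denominator is plausible and consistent with the two endpoints, but it is not established anywhere and would be the heart of any actual proof.

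In short: your first paragraph correctly reconstructs the paper's motivation for the conjecture; the remainder is a reasonable sketch of a possible attack, but it remains a conjecture both in the paper and after your proposal.
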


\begin{center}
\begin{figure}[h]
 \includegraphics[scale=0.36]{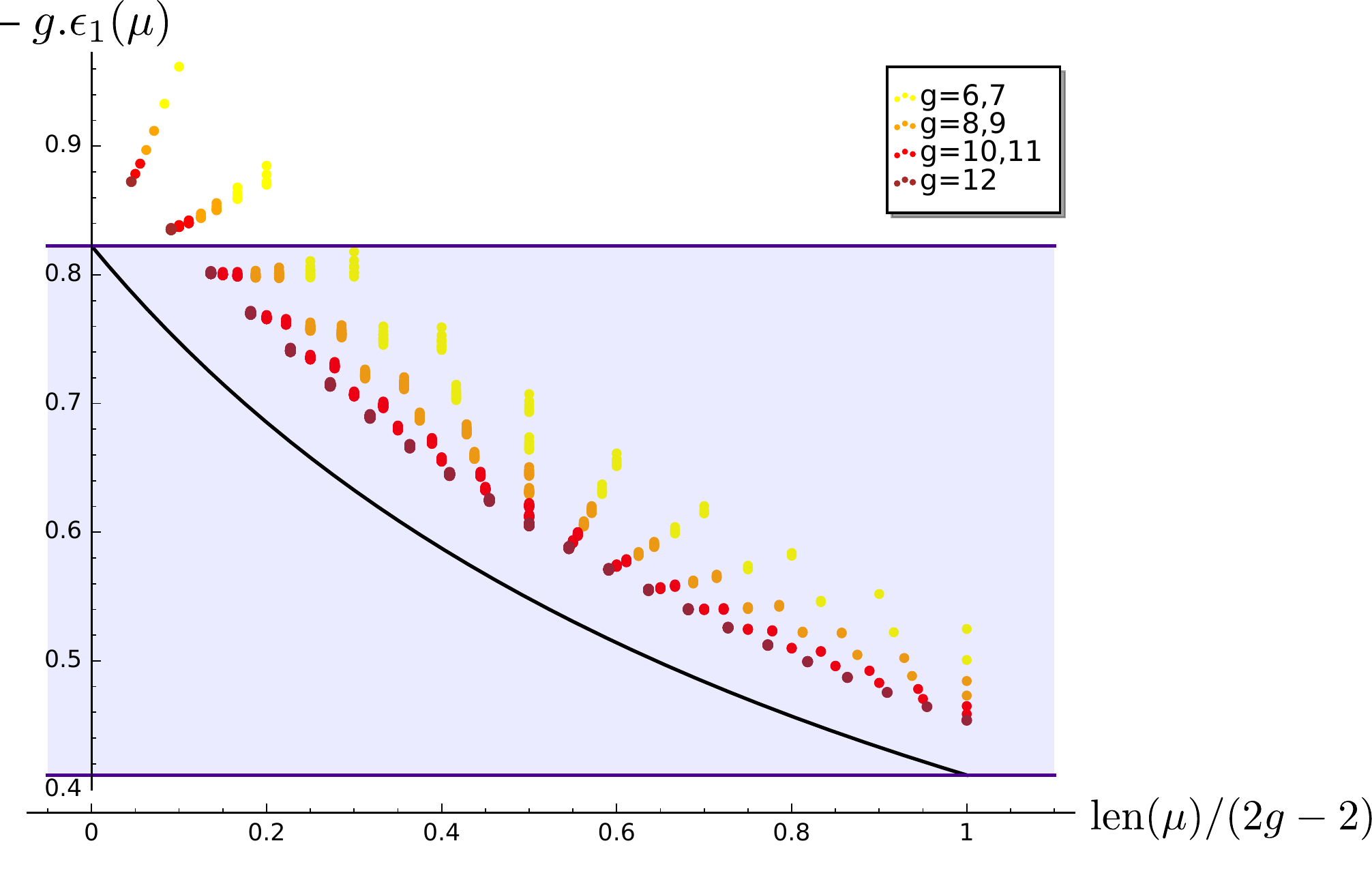} 
\caption{Absolute value of $g.\epsilon_1(\mu)$ in function of ${\rm len}(\mu)/(2g-2)$ for $g=6,7,\ldots,12$. In black, the graph of $y=\pi^2/12(1+x)$.}
\end{figure}
\end{center}

%\begin{remark}
%A FAIRE: comparer les termes suivants de l'expansion!
%\end{remark}

\subsection{Plan of the paper} We will follow linearly the general strategy of the introduction. In Section~\ref{sec:volint} we show how to express the volumes of the strata $\H_1(2g-2)$ in terms of integrals of $\xi$-classes. In Section~\ref{sec:int} we prove the induction formula for the integrals of $\xi$-classes (Theorem~\ref{th:main}). The main ingredient in this proof is the computation of the cohomlogy classes Poincar\'e-dual to $[\P\oH(\mu)]\in H^*(\P\oH_g)$ as in~\cite{Sau} (we will recall a simplified version of this computation here). Finally, in Section~\ref{sec:asympt} we analyze the asymptotic behavior of the integrals of $\xi$-classes to deduce Theorem~\ref{th:asympt}.

\subsection*{Acknowledgement} I would like to thank Dimitri Zvonkine, Martin Mo\"eller, Dawei Chen, Xavier Blot, Siarhei Finski and Felix Janda for very useful conversations on intersection of tautological classes over spaces of differentials. I am also very thankful to Anton Zorich and Charles Fougeron for having introduced me to the topic of large-genus invariants (and for having provided tables of numerical computations that allowed me to understand part of the results of the paper). Finally, I am very grateful to Elie de Panafieu for his precious help to handle the asymptotic analysis.

\section{Volumes and integrals of canonical classes}\label{sec:volint}

In this section we prove that ${\rm Vol}(2g-2)=\frac{2(2\pi)^{2g}}{(2g-1)!} a_g$ for all $g\geq 1$ (i.e. Proposition~\ref{vgag}). This identity may be known to experts. However, we could not find a reference to cite so we give the details of the proof. 

\subsection{The symplectic affine structure on $\H(2g-2)$}

Let $g\geq 1$. Let $(C,\alpha)\in \H(2g-2)$. Let $x\in C$ be the unique zero of $\alpha$. We have seen that a local parametrization of  $\H(2g-2)$ is given by the relative cohomology group $H^1(C,\{x\},\C)$. This space is isomorphic to $H^1(C,\C)$. Moreover, this space contains the lattice $H^1(C,\Z\oplus i\Z)$. 

We choose a symplectic basis of $H_1(C,\Z)$ made of closed curves  $(A_i,B_i)_{1\leq i\leq g}$ on $C$.  With this basis, the coordinates on $H^1(C,\C)$ are given by 
$$z_{A_i}= \int_{A_i}\!\!\! \alpha \hspace{5pt} \text{ and }\hspace{5pt} z_{B_i}=\int_{B_i}\!\!\!  \alpha.$$ 
Therefore the space $\H(2g-2)$ is endowed with an affine  structure whose transition map are matrices in ${\rm Sp}(2g,\Z)$. Besides, the reciprocity law defines a hermitian metric on $H^1(C,\C)$ given by
\begin{equation}\label{hermitian}
\langle \alpha, \alpha'\rangle = \frac{i}{2} \int_{C} \alpha \wedge \alpha'= \frac{i}{2} \sum_{i=1}^{g} (z_{A_i}  \overline{z}_{B_i}' -\overline{z}_{A_i}'  z_{B_i}) . 
\end{equation}
The lattice $H^1(C,\Z\oplus i\Z)\subset H^1(C,\C)$ defines a volume form $\nu$ in $H^{4g}(\oH(2g-2),\R)$ (normalized in such a way that the volume of a unit cube is $1$). Both the volume form and the hermitian metric are independent of the choice of the basis $(A_i,B_i)_{1\leq i\leq g}$. Now, for all $R\in \R_{+}$, we denote by $\H_{\leq R}(2g-2)$ the space of $(C,\alpha)$ with $||\alpha|| \leq R$.  

\subsection{Two volume forms on $\P\H(2g-2)$}  Using the period coordinates we define the following volume forms.
\begin{itemize}
\item  Let us consider the complex projectivization of the space of period coordinates $\P H^1(C,\C)$. The line bundle $\O(-1)\to \P H^1(C,\C)$ is endowed with the hermitian metric $h$ induced by the above hermitian metric $\langle\cdot,\cdot \rangle$ on $H^1(C,\C)$. This hermitian metric extends to a singular hermitian metric on $\O(-1)\to \P\oH(2g-2)$. We define $\omega$ to be the curvature form associated to $h$ in $\P\H(2g-2)$, i.e.
$$\omega=\frac{1}{2i\pi} \partial\overline{\partial}\log\left(h( \sigma)\right)
$$ 
for any local holomorphic non-vanishing section $\sigma$ of $\O(-1)$. With this 2-form we define $\nu_h=\omega^{2g-1}$. 

The form $\omega$ is a closed 2-form on $\P\H(2g-2)$. However, this form cannot be extended to a regular form on the whole boundary of $\P\oH(2g-2)$. Indeed, the function $h$ is  ill-defined along the divisor of pairs $(C,\alpha)$ with infinite area; i.e. when $C$ has at least one non-seperating node and the form $\alpha$ has a pole of order exactly $1$ at the two branches of this node.  This obstruction is the motivation for Assumption~\ref{assumption}.
\item 
The second volume form on $\P\H(2g-2)$, $\widetilde{\nu}$, is obtained by disintegration of $\nu$. We have a projection map  $p:\H(2g-2)\to \P\H(2g-2)$ whose fibers are isomorphic to $\C^*$. Let $D\subset \P\H(2g-2)$ be an open domain. Then the volume of $D$ for $\widetilde{\nu}$  is the total volume of $p^{-1}(D)\cap \H_{\leq 1}(2g-2)$.
\end{itemize}
\begin{mylem}\label{lem:relforms}
We have
\begin{equation}\label{relforms}
\nu_h = - \frac{ (2g-1)!}{2(2i\pi)^{2g}} \dim_\R(\H(2g-2)) \widetilde{\nu}.
\end{equation}
\end{mylem}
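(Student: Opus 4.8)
The plan is to compare the two volume forms fiberwise over $\P\H(2g-2)$, using the fact that both are built explicitly from the same hermitian datum $\langle\cdot,\cdot\rangle$ on $H^1(C,\C)$. First I would work locally: fix $(C,\alpha)\in\H(2g-2)$, choose a symplectic basis $(A_i,B_i)$, and use the period coordinates $z_{A_i},z_{B_i}$ on $H^1(C,\C)\cong\C^{2g}$. In these coordinates the hermitian form \eqref{hermitian} is a fixed nondegenerate hermitian form of signature $(g,g)$, and the lattice $H^1(C,\Z\oplus i\Z)$ is the standard lattice $(\Z\oplus i\Z)^{2g}$, so the Lebesgue form $\nu$ is the standard one on $\C^{2g}\cong\R^{4g}$. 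The essential point is that everything is $\mathrm{Sp}(2g,\Z)$-invariant, hence the constant I compute does not depend on the chosen point or basis.

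Next I would compute $\widetilde\nu$. Using the $\C^*$-action, write a point of $\H(2g-2)$ near $(C,\alpha)$ as $\lambda\cdot w$ with $w$ in a local section of $p$ and $\lambda\in\C^*$; then $\|\lambda w\|^2 = |\lambda|^2 \|w\|^2$, and disintegrating $\nu$ along the $\C^*$-fibers means writing $\nu = d(\text{Leb on }\C^*\text{-fiber})\wedge \widetilde\nu$ and integrating the fiber over $\{|\lambda|^2\|w\|^2\le 1\}$. The fiber integral $\int_{|\lambda|\le \|w\|^{-1}} \frac{i}{2}d\lambda\wedge d\bar\lambda = \pi \|w\|^{-2}$ produces a factor $\pi/\|w\|^{2}$, so that $\widetilde\nu$ is, up to this explicit factor, the pushforward of $\nu$; concretely, on the unit sphere $\H_1(2g-2)$ it is the standard "cone" measure. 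The cleanest way to package this is: $\widetilde\nu$ equals (a constant times) the $(2g-1)$-st power of the Fubini–Study-type curvature form $\omega$ attached to the \emph{same} metric, because the Lebesgue measure on $\C^{2g}$ with its hermitian norm disintegrates over $\P^{2g-1}$ precisely into the radial measure times the top exterior power of the curvature of $\O(-1)$ with its induced metric. I would make this identification at a single point and then invoke invariance.

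Then I would compute $\nu_h=\omega^{2g-1}$ in the same coordinates. By definition $\omega = \frac{1}{2i\pi}\partial\bar\partial\log h(\sigma)$ for a local nonvanishing holomorphic section $\sigma$ of $\O(-1)$; taking $\sigma$ to be the tautological section over an affine chart of $\P H^1(C,\C)$, $h(\sigma) = \langle v,v\rangle$ where $v$ is the corresponding vector, so $\omega$ is the standard curvature form of the hermitian metric of signature $(g,g)$. Its $(2g-1)$-st power, integrated against the radial direction, recovers the Lebesgue form $\nu$ up to the universal constant obtained by evaluating $\int_{\P^{2g-1}}\omega_{\mathrm{FS}}^{2g-1}$-type normalizations and the $\frac{1}{2i\pi}$ factors; this is where the $(2i\pi)^{2g}$, the $(2g-1)!$ (from expanding the power / volume of the ball), the factor $2$ (from $|\lambda|$ versus $|\lambda|^2$, i.e. the real versus complex radial coordinate), and $\dim_\R\H(2g-2) = 4g-2$ all enter. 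Matching the two computations gives \eqref{relforms}. The sign comes from the signature $(g,g)$: $\omega^{2g-1}$ is $(-1)^{g-1}$ or $(-1)^{g}$ times a genuine volume form, which is the source of the overall minus sign.

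The main obstacle I anticipate is purely bookkeeping: pinning down the universal constant with the correct sign and the correct powers of $2$, $\pi$, $i$, and the correct factorial, since there are several competing conventions (Lebesgue normalization of the lattice, the $\frac{1}{2i\pi}$ in $\omega$, the disintegration convention for $\nu_1$, and whether one uses $\|\alpha\|$ or $\|\alpha\|^2$ as the radial variable). I would control this by doing the genus-independent model computation once on $\C^{2g}$ with a diagonalized hermitian form, e.g. reducing to the model $\int_{\C^{2g}} f(\|v\|)\,\nu = C\int_0^\infty f(r)\,r^{4g-1}dr$ with $C$ computed from the volume of the unit ball, and separately $\omega^{2g-1}$ restricted to radius-$r$ spheres, then reading off the ratio. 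Everything else — independence of the symplectic basis, extension across the good locus, and the identification of $\widetilde\nu$ with the disintegration — follows from the $\mathrm{Sp}(2g,\Z)$-invariance already recorded above.
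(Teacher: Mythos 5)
Your plan follows essentially the same route as the paper's proof: fix a fiber $H^1(C,\C)$ with its signature-$(g,g)$ hermitian form, diagonalize it in coordinates of the type $z_{a_i},z_{b_i}$, use invariance under the unitary group of the form to reduce the comparison of $\nu_h=\omega^{2g-1}$ with the $\C^*$-disintegration $\widetilde{\nu}$ of the lattice-normalized Lebesgue measure to an explicit computation at a single point, and read off the universal constant there (the paper does exactly this in the chart $z_{a_1}\neq 0$ at points $(1,z_{b_1},0,\dots,0)$, using invariance under $U(g,g)\cap U(2g)$). One caveat on your proposed ``control'' computation: normalizing via the volume of the unit ball and via $\omega^{2g-1}$ restricted to radius-$r$ spheres does not literally make sense here, because for the indefinite form the region $\{h\le 1\}$ inside the positive cone has infinite Lebesgue volume and the level sets $\{h=r^2\}$ are noncompact hyperboloids; the constant must be extracted from the pointwise comparison of the two top forms (your primary plan), not from a global radial integral. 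With that route taken, the argument is the paper's.
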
 
\begin{proof}
%\noindent{\bf First step: $\nu_h$ and $\widetilde{\nu}$ are proportional.} 
Let us fix a point $(C,\alpha)\in \H(2g-2)$ (respectively in $\P\H(2g-2)$). An open neighborhood of this point is of the form is of the form $U/{\rm Aut}(C,\alpha)$ for some open set $U$ in $H^1(C,\C)$ (respectively in $\P H^1(C,\C)$). This open set is included in  the positive cone 
$$\mathcal{C}=\{v\in H^1(C,\C), \text{ s.t. } h(v)>0 \}$$ 
(respectively in $\P\mathcal{C}$). The volume forms $\nu_h$ and $\widetilde{\nu}$ can be defined on $\P\mathcal{C}$ and we will prove that the relation~\eqref{relforms} holds in $\P\mathcal{C}$.

In $H^1(C,\C)$ we have the coordinates $(z_{A_i}, z_{B_i})_{1\leq i\leq g}$ obtained from the symplectic structure on $C$. We introduce the following coordinates
\begin{eqnarray*}
z_{a_i}= \frac{1}{{2}} (z_{A_i}- i z_{B_i}),\\
z_{b_i}= \frac{1}{{2}}  (z_{A_i}+i  z_{B_i}).
\end{eqnarray*}
With these coordinates, the hermitian metric $h$ is given by 
$$
h(z_{a_i},z_{b_i})=\sum_{i=1}^g z_{a_i}\overline{z}_{a_i} - z_{b_i}\overline{z}_{b_i}
$$
and it has signature $(g,g)$. We will also consider the standard metric on $H^1(C,\C)$ given by
$$
h_{\rm st}(z_{a_i},z_{b_i})=\sum_{i=1}^g |z_{a_i}|^2 + |z_{b_i}|^2.
$$
Using this hermitian metric, we can define the curvature of $\O(-1)$ on $\P H^1(C,\C)$ using the Laplace-Beltrami  operator and the top intersection of this form $\nu_{\rm st}$. Besides one can also construct $\widetilde{\nu}_{\rm st}$ by the same procedure as $\widetilde{\nu}$. The volume forms $\nu_{\rm st}$ and $\widetilde{\nu}_{\rm st}$ are proportional (see \cite{Vois1}, Chapter 3 for example). Let us define the two following functions on $\P H^1(C,\C)$ with value in $\R_{>0}$.
$$
f= \frac{\nu_{h}}{\nu_{\rm st}}, \text{ and } \widetilde{f}= \frac{\widetilde{\nu}}{\widetilde{\nu}_{\rm st}}.
$$

We have two groups $U(g,g)$ and $U(2g)$ (for the standard metric) acting on $\P H^1(C,\C)$. The functions $f$ and $\widetilde{f}$ are invariant under the action of $\gamma\in U(g,g)\cap U(2g)$. Indeed $\nu_h$ is $U(g,g)$-equivariant and $\nu_{\rm st}$ is $U(2g)$-equivariant thus $f$ is invariant under $U(g,g)\cap U(2g)$. Besides the function $\widetilde{f}$ is equal to
$$
\widetilde{f} (v)= \left(\frac{h(v)}{h_{\rm st}(v)}\right)^{2g}
$$
for any vector $v\neq 0$  which is invariant under the action of $U(g,g)\cap U(2g)$. Therefore $\nu_h/\widetilde{\nu}$ is invariant under the action of $U(g,g)\cap U(2g)$.

The group $U(g,g)\cap U(g)$ contains the matrices of the form
$$
\left(
   \begin{array}{c | c}
      U &  0\\
      \hline
      
      0 & U' 
   \end{array}
   \right), \text{ with $U$ and $U'$ in $U(g)$.}
$$
For all $a$ and $b$ in $\R_{\geq 0}$, these matrices act transitively on the subspace
$$
H^1(C,\C) \supset E_{a,b}=\left\{ (z_{a_1},z_{b_1},\ldots), \sum_{i=1}^g |z_{a_i}|^2 = a \text{ and } \sum_{i=1}^g |z_{b_i}|^2 = b\right\}.
$$
Therefore, we only need to compare the volume forms $\nu_h$ and $\widetilde{\nu}$ at the points of the form $(z_{a_1},z_{b_1},0,0,\ldots,0).$

We consider the chart $U_{a_1}\subset\P H^1(C,\C)$ defined by $z_{a_1}\neq 0$ with its natural identification $U_{a_1}\simeq \C^{2g-1}$. The line bundle $\O(-1)$ has a natural section $\sigma_{a_1}$ over $U_{a_1}$ given by $\sigma_{a_1}(z_{b_1},z_{a_2},z_{b_2},\ldots)=(1,z_{b_1},z_{a_2},z_{b_2},\ldots)$. In this chart, the volume form $\widetilde{\nu}$ is given by

$$
\frac{2(2\pi)}{\dim_\R(\H(2g-2))\cdot h(\sigma_{a_1})^{2g}  } i^{2g-1} \left( dz_{b_1}\wedge d\overline{z}_{b_1} \wedge \prod_{i>1} (dz_{a_i}\wedge d\overline{z}_{a_i} \wedge dz_{b_i}\wedge d\overline{z}_{b_i})\right).
$$
Note the factor $2$ in this formula, it comes from the choice of coordinates $(z_{a_i},z_{b_i})$: in coordinates $(z_{A_i},z_{B_i})$ the expression of $\widetilde{\nu}$ is the same without this factor $2$.  At the point $(1,z_{b_1},0,\ldots,0)$, the 2-form $\omega$ is given by 
\begin{eqnarray*}
\omega \!\!\!\!\! &=&  \!\!\!\!\!  \frac{1}{2i\pi}  \frac{(1-|z_{b_1}|^2) \left(-  d{z}_{b_1}\wedge d\overline{z}_{b_1}+\sum_{i>1} d{z}_{a_i}\wedge d\overline{z}_{a_i}- d{z}_{b_i}\wedge d\overline{z}_{b_i}\right) - |z_{b_1}|^2  d{z}_{b_1}\wedge d\overline{z}_{b_1}}{ (1-|z_{b_1}|^2) ^2}\\
&=&  \!\!\!\!\!  \frac{1}{2i\pi} \left( \frac{\sum_{i>1} d{z}_{a_i}\wedge d\overline{z}_{a_i}- d{z}_{b_i}\wedge d\overline{z}_{b_i}}{ h(\sigma_{a_1}) } -  \frac{d{z}_{b_1}\wedge d\overline{z}_{b_1}}{ h(\sigma_{a_1})^2 } \right).
\end{eqnarray*}
We get 
\begin{eqnarray*}
\nu_h \!\!\!\!\! &=& \frac{(2g-1)!}{(2i\pi)^{2g-1} h(\sigma_{a_1})^{2g}} \cdot\left(-1\right)^{g} \left( dz_{b_1}\wedge d\overline{z}_{b_1} \wedge \prod_{i>1} (dz_{a_i}\wedge d\overline{z}_{a_i} \wedge dz_{b_i}\wedge d\overline{z}_{b_i})\right)\\
&=& - \frac{ (2g-1)!}{2(2i\pi)^{2g}} \dim_\R(\H(2g-2)) \widetilde{\nu}.
\end{eqnarray*}
\end{proof}
\begin{proof}[End of the proof of Proposition~\ref{pr:expr} under the assumption~\ref{assumption}] Let $\phi:X\to \P\oH(2g-2)$ be a desingularization of $\P\oH(2g-2)$ such that the hermitian metric on $\phi^*(\O(1))$ is good. Then the intersection number $\int_X \phi^*(\xi^{2g-1})$  is equal to the total volume of $\P\H(2g-2)$ for  $\nu_h=\omega^{2g-1}$. Now using the projection formula and the fact that $\phi$ is birational (thus of degree 1) we get 
$$\int_{\P\oH(2g-2)} \xi^{2g-1}=
\nu_h\left(\P\H(2g-2)\right).$$ Therefore Propostion~\ref{pr:expr} follows from Lemma~\ref{lem:relforms}.
\end{proof}

\

\section{Induction formula for integrals of canonical classes}\label{sec:int}

Let $Z=(k_1,\ldots,k_n)$ be a vector of non-negative integers. From now on in the text, for all vectors of integers we set
$$
\ell(Z)={\rm length}(Z) \hspace{5pt} \text{and} \hspace{5pt} |Z|=\sum_{i=1}^n k_i.
$$
\begin{mydef} The {\em projectivized marked stratum of type $Z$} is the locus $\A_g(Z)\subset \P\H_{g,n}$ defined as
$$
\left\{ (C,\alpha, x_1,\ldots, x_n),  \text{ s.t. $x_i$ is a zero of order $k_i$ for all $1\leq i\leq n$} \right\}.
$$ 
It is a smooth substack of $\P\H_{g,n}$ codimension $|Z|$. We denote by $\oA_g(Z)$ the Zariski closure of $\A_{g}(Z)$ in $\P\oH_{g,n}$.
\end{mydef}

Note that in this definition $Z$ does not need to satisfy $|Z|=2g-2$. However, if $|Z|=2g-2$, then we have
$$
\int_{\P\oH_g} \xi^{2g-2+n} [\P\oH(Z)] = \frac{1}{|{\rm Aut}(Z)|} \int_{\P\oH_{g,n}} \! \! \! \xi^{2g-2+n} [\oA_g(Z)] 
$$
where $Aut(Z)$ is the group of permutation of $[\![ 1,n]\!]$ preserving $Z$ (this follows from the projection formula). The purpose of this section is to compute the intersection number on the right-hand side.

\subsection{Vanishing for $n>1$}

First let us recall the following classical result.
\begin{mylem}[Mumford, \cite{Mum}] \label{lemmum} 
We have the following equality in $H^*(\P\oM_{g,n})$:
$$s_*(\oH_{g,n}) = c_*(\oH_{g,n})^{-1} = c_*(\oH_{g,n}^\vee)
$$
where $s_*$ and $c_*$ stand for the total Segre and Chern classes.
In particular $\lambda_g^2=0$.
\end{mylem}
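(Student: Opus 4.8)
The plan is to reduce everything to the vanishing of the even-degree components of the Chern character of the Hodge bundle, which is Mumford's Grothendieck--Riemann--Roch computation. \textbf{Step 1 (GRR).} Apply GRR to the universal curve $\pi\colon\oC_{g,n}\to\oM_{g,n}$ and the relative dualizing sheaf $\omega_\pi$. Relative Serre duality gives $R^0\pi_*\omega_\pi=\oH_{g,n}$ and $R^1\pi_*\omega_\pi\cong(R^0\pi_*\O_{\oC_{g,n}})^\vee=\O_{\oM_{g,n}}$, so in the Grothendieck group $R\pi_*\omega_\pi=[\oH_{g,n}]-[\O]$ and $\mathrm{ch}(\oH_{g,n})=1+\pi_*\big(\mathrm{ch}(\omega_\pi)\,\mathrm{td}(T_\pi)\big)$, using that $\pi$ is a local complete intersection morphism. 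On the smooth locus the integrand is $\tfrac{K}{1-e^{-K}}=1+\tfrac K2+\sum_{l\ge1}\tfrac{B_{2l}}{(2l)!}K^{2l}$ with $K=c_1(\omega_\pi)$; since $\pi_*$ lowers cohomological degree by $2$, the summand $\tfrac{B_{2l}}{(2l)!}K^{2l}$ contributes only to $\mathrm{ch}_{2l-1}$, i.e.\ to odd-indexed components. The one genuinely delicate point is that the correction coming from the codimension-one locus of nodes of $\pi$ is the pushforward of a class of even cohomological degree along a codimension-one boundary stratum, hence also lands in odd-indexed components; this is exactly Mumford's calculation and I would simply quote it. Conclusion: $\mathrm{ch}_{2l}(\oH_{g,n})=0$ for all $l\ge1$.

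\textbf{Step 2 (from the Chern character to Chern classes).} Because $\mathrm{ch}_k(\oH_{g,n}^\vee)=(-1)^k\mathrm{ch}_k(\oH_{g,n})$, Step 1 gives $\mathrm{ch}(\oH_{g,n}\oplus\oH_{g,n}^\vee)=2\sum_k\mathrm{ch}_{2k}(\oH_{g,n})=2g$, the Chern character of the trivial bundle of rank $2g$. Over $\Q$ the Chern character determines the total Chern class, so $c_*(\oH_{g,n})\,c_*(\oH_{g,n}^\vee)=c_*(\oH_{g,n}\oplus\oH_{g,n}^\vee)=1$. This is precisely $s_*(\oH_{g,n})=c_*(\oH_{g,n})^{-1}=c_*(\oH_{g,n}^\vee)$, and it pulls back to $\P\oH_{g,n}$ under $p$.

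\textbf{Step 3 ($\lambda_g^2=0$).} Writing $c_*(\oH_{g,n})=\sum_{i=0}^g\lambda_i$ and $c_*(\oH_{g,n}^\vee)=\sum_{j=0}^g(-1)^j\lambda_j$, the only contribution to the product $c_*(\oH_{g,n})\,c_*(\oH_{g,n}^\vee)$ in cohomological degree $4g$ is $(-1)^g\lambda_g^2$; since the product equals $1$, this forces $\lambda_g^2=0$.

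\textbf{Main obstacle.} The only real work is Step 1, and within it the bookkeeping of the boundary (node) contribution to GRR for the non-smooth morphism $\pi$. Since this is the content of~\cite{Mum}, in the write-up I would invoke it rather than reproduce it, and give Steps 2--3 in full detail.
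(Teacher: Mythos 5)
Your argument is correct and is exactly the classical Grothendieck--Riemann--Roch computation of Mumford that the paper invokes by citation without reproducing: the paper gives no proof of this lemma, and your Steps 1--3 are the standard derivation ($\mathrm{ch}_{2l}(\oH_{g,n})=0$ for $l\ge 1$, hence $c_*(\oH_{g,n})c_*(\oH_{g,n}^\vee)=1$, hence $(-1)^g\lambda_g^2=0$ in degree $4g$). Quoting \cite{Mum} for the boundary contribution in Step 1 is appropriate, since that is precisely the content of the cited reference.
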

We use this identity here to simplify the computation of $\xi^{2g-2+n} [\oA_g(Z)]$. The class $[\oA_g(Z)]$ is equal to
$$
\sum_{k=0}^{2g-2} \xi^{2g-2-k} \alpha^k_g(Z)
$$
where the classes $\alpha^k_g(Z)$  are pull-back from $H^{2k}(\oM_{g,n})$. Therefore the push-forward of $\xi^{2g-2+n}[\oA_g(Z)]$ under $p$ is given by
$$
\sum_{k=0}^{2g-2} s_{g-1+n+k} \alpha^k_g(Z)= \sum_{k=0}^{2g-2} (-1)^{g-1+n+k} \lambda_{g-1+n+k} \alpha^k_g(Z)
$$
(this follows from the projection formula and Lemma~\ref{lemmum}). However $\lambda_{2g-2+n+k}=0$ for $2g-2+n+k>g$. Therefore we get 
\begin{mypr}
The class $\xi^{2g-2+n} [\oA_g(Z)] $ vanishes if $n>1$. For $n=1$ we have:
$$
\int_{\P\oH_{g,1}} \! \! \! \xi^{2g-1} [\oA_g(2g-2)]  = \int_{\oM_{g,1}} \! \! \! (-1)^{g}  \lambda_g \alpha^0_g(Z).
$$
\end{mypr}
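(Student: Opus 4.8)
The statement to prove is the vanishing of $\xi^{2g-2+n}[\oA_g(Z)]$ for $n>1$ (with $|Z|=2g-2$) together with the stated identity for $n=1$. The plan is essentially to run the computation already sketched in the excerpt, filling in the two structural facts it rests on: (1) that $[\oA_g(Z)]$, as a class in $H^*(\P\oH_{g,n})$, can be written in the form $\sum_{k=0}^{2g-2}\xi^{2g-2-k}\,\alpha^k_g(Z)$ with $\alpha^k_g(Z)$ pulled back from $\oM_{g,n}$; and (2) the Segre/Chern dictionary of Lemma~\ref{lemmum}, $s_*(\oH_{g,n})=c_*(\oH_{g,n}^\vee)$, which in particular gives $\lambda_j=0$ for $j>g$.

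For step (1), I would invoke the splitting-principle presentation
$$
H^*(\P\oH_{g,n})\simeq H^*(\oM_{g,n})[\xi]/(\xi^g+\lambda_1\xi^{g-1}+\cdots+\lambda_g)
$$
recalled in the introduction: since $\oA_g(Z)$ has codimension $|Z|=2g-2$ and $H^*(\P\oH_{g,n})$ is a free $H^*(\oM_{g,n})$-module with basis $1,\xi,\dots,\xi^{g-1}$, its class is uniquely $\sum_j \xi^j\beta_j$ with $\beta_j$ pulled back from the base; reindexing $j=2g-2-k$ (and absorbing the relation to reduce powers of $\xi$ above $g-1$ into the coefficients) puts it in the quoted form, with $\alpha^k_g(Z)\in H^{2k}(\oM_{g,n})$ by degree count. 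Then I multiply by $\xi^{2g-2+n}$ and push forward along $p:\P\oH_{g,n}\to\oM_{g,n}$. The fibre dimension is $g-1$, so $p_*\xi^{g-1+m}=s_m(\oH_{g,n})$ for $m\geq 0$ (and $p_*\xi^j=0$ for $j<g-1$); with the $\xi$-power $2g-2+n+(2g-2-k)$... more carefully, $p_*\bigl(\xi^{2g-2+n}\cdot\xi^{2g-2-k}\,\alpha^k_g(Z)\bigr)=p_*\bigl(\xi^{(g-1)+(3g-3+n-k)}\bigr)\alpha^k_g(Z)=s_{3g-3+n-k}(\oH_{g,n})\,\alpha^k_g(Z)$. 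Hmm — the excerpt writes $s_{g-1+n+k}$; reconciling the bookkeeping is a small point I would just do carefully, the upshot being $p_*\bigl(\xi^{2g-2+n}[\oA_g(Z)]\bigr)=\sum_{k}s_{g-1+n+k}(\oH_{g,n})\,\alpha^k_g(Z)$ as stated. By Lemma~\ref{lemmum}, $s_{g-1+n+k}(\oH_{g,n})=(-1)^{g-1+n+k}\lambda_{g-1+n+k}$, which vanishes whenever $g-1+n+k>g$, i.e. whenever $n+k>1$. For $n>1$ this forces every term to vanish; for $n=1$ only $k=0$ survives, giving $p_*\bigl(\xi^{2g-1}[\oA_g(2g-2)]\bigr)=(-1)^g\lambda_g\,\alpha^0_g(2g-2)$, and integrating over $\oM_{g,1}$ yields the displayed formula.

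The only genuinely substantive point — and the one I would treat most carefully — is the claim in step (1) that the coefficients $\alpha^k_g(Z)$ are pulled back from the base. This is not automatic from the $\C^*$-action alone; it is the statement that $[\oA_g(Z)]$ lies in the subring generated over $p^*H^*(\oM_{g,n})$ by $\xi$, which holds because $H^*(\P\oH_{g,n})$ is generated as a $p^*H^*(\oM_{g,n})$-algebra by $\xi$ (Leray–Hirsch / the splitting principle for a projective bundle). Everything else is the formal manipulation of Chern/Segre classes via Lemma~\ref{lemmum} together with Mumford's relation $\lambda_j=0$ for $j>g$, and the projection formula for $p$. I would present the argument in exactly this order: recall the module structure, write $[\oA_g(Z)]$ in the stated form, push forward term by term computing $p_*\xi^N$ as a Segre class, substitute $s_*=c_*(\oH^\vee)$, and read off the vanishing from the degree bound $j\leq g$ on $\lambda_j$.
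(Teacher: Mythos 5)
Your proposal is correct and follows essentially the same route as the paper: expand $[\oA_g(Z)]$ over $p^*H^*(\oM_{g,n})$ in powers of $\xi$, push forward $\xi$-powers to Segre classes, and apply Mumford's relation $s_*=c_*(\oH_{g,n}^\vee)$ together with $\lambda_j=0$ for $j>g$. The index discrepancy you flag is only a notational slip in the paper: the formula $s_{g-1+n+k}$ matches the convention used in the rest of the text, where $\alpha^k_g(Z)$ denotes the coefficient of $\xi^k$ (hence lies in $H^{2(2g-2-k)}(\oM_{g,n})$), so for $n=1$ the surviving term is the degree-zero-in-$\xi$ coefficient $\alpha^0_g(Z)\in H^{2(2g-2)}$ paired with $(-1)^g\lambda_g$, exactly as both you and the paper conclude.
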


\subsection{$\lambda_g$-Conjecture}

We recall the following important result of Faber and Pandharipande.
\begin{mypr}[$\lambda_g$-conjecture, \cite{FabPan}]\label{lambdag}
Let $b_g=\int_{\oM_{g,1}} \! \! \! \lambda_g\psi_1^{2g-2}$. Then we have
$$
b_g= \frac{2^{2g-1}-1}{2^{2g-1}}\frac{|B_{2g}|}{(2g)!},
$$
where $B_{2g}$ is the $(2g)$th Bernouilli number. 
\end{mypr}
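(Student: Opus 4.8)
First I would recast the statement as a generating-function identity. Put $\mathcal{F}_\lambda(t)=1+\sum_{g\geq 1}b_g\,t^{2g}$. Then the asserted value of $b_g$ is equivalent to
\[
\mathcal{F}_\lambda(t)=\mathcal{S}(t)=\frac{t/2}{\sin(t/2)},
\]
because the classical expansion $\csc z=\frac1z+\sum_{n\geq1}\frac{(-1)^{n+1}2(2^{2n-1}-1)B_{2n}}{(2n)!}\,z^{2n-1}$, specialised at $z=t/2$, gives $[t^{2g}]\mathcal{S}(t)=\frac{(2^{2g-1}-1)|B_{2g}|}{2^{2g-1}(2g)!}$ (using $(-1)^{g+1}B_{2g}=|B_{2g}|$); as a sanity check, $b_1=\int_{\oM_{1,1}}\lambda_1=\frac1{24}=[t^2]\mathcal{S}(t)$. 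So it suffices to prove $\mathcal{F}_\lambda=\mathcal{S}$, i.e. the one-point $\lambda_g$ evaluation of Faber and Pandharipande.

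\textbf{Proof of $\mathcal{F}_\lambda=\mathcal{S}$.}
I would use the ELSV formula for one-part Hurwitz numbers together with the character theory of the symmetric group. Write $\Lambda_g^\vee(1)=\sum_{i=0}^g(-1)^i\lambda_i$, and let $H_g(d)$ be the number, weighted by automorphisms, of connected genus-$g$ degree-$d$ covers of $\P^1$ with one point of total ramification and $2g-1+d$ further simple branch points. The ELSV formula reads, for every $d\geq 1$,
\[
\frac{d!}{d^{\,d}\,(2g-1+d)!}\;H_g(d)\;=\;\int_{\oM_{g,1}}\frac{\Lambda_g^\vee(1)}{1-d\psi_1}.
\]
The right-hand side equals $\sum_{i=0}^{g}(-1)^i\bigl(\int_{\oM_{g,1}}\psi_1^{3g-2-i}\lambda_i\bigr)\,d^{\,3g-2-i}$, a polynomial in $d$ whose lowest term has degree $2g-2$ with coefficient $(-1)^g b_g$. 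On the left, $H_g(d)$ is evaluated by the Frobenius--Burnside formula for the number of transposition factorisations of a $d$-cycle, using two facts about $S_d$: the character on a $d$-cycle vanishes off the hook partitions $(d-r,1^r)$, where it equals $(-1)^r$ (Murnaghan--Nakayama), and the class sum of transpositions acts on the irreducible indexed by this hook by its content sum $\tfrac{d(d-1-2r)}{2}$ (a Jucys--Murphy / class-algebra fact), the dimension being $\binom{d-1}{r}$. Summing over $r$ with $k=2g-1+d$ factors and using $\sum_{r=0}^{d-1}(-1)^r\binom{d-1}{r}e^{z(d-1-2r)}=(2\sinh z)^{d-1}$ one obtains
\[
\frac{d!}{d^{\,d}\,(2g-1+d)!}\;H_g(d)\;=\;\frac{d^{\,2g-2}}{2^{2g}}\;[z^{2g}]\Bigl(\tfrac{\sinh z}{z}\Bigr)^{d-1}.
\]
Since $[z^{2g}](\sinh z/z)^{d-1}$ is a polynomial in $d$, the coefficient of $d^{\,2g-2}$ on the right is $2^{-2g}$ times its constant term, i.e. its value at $d=0$, namely $2^{-2g}[z^{2g}](\sinh z/z)^{-1}=2^{-2g}[z^{2g}]\tfrac{z}{\sinh z}$. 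Comparing with the left-hand expansion,
\[
(-1)^g b_g \;=\; \frac{1}{2^{2g}}\,[z^{2g}]\frac{z}{\sinh z}\;=\;\frac{(-1)^g}{2^{2g}}\,[z^{2g}]\frac{z}{\sin z}
\]
(the last step being the substitution $z\mapsto iz$), hence $b_g=2^{-2g}[z^{2g}]\frac{z}{\sin z}=[t^{2g}]\frac{t/2}{\sin(t/2)}=[t^{2g}]\mathcal{S}(t)$. This is $\mathcal{F}_\lambda=\mathcal{S}$, and the closed form of the first paragraph follows by the Bernoulli expansion.

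\textbf{The hard part.}
The delicate ingredients are (i) pinning down the exact normalisation in ELSV --- the powers of $d$, the factorials, the automorphism factor (which is $1$ for a single part), and the observation that connectedness is automatic here since the monodromy group contains a $d$-cycle; and (ii) the two representation-theoretic inputs above, both standard but needing careful bookkeeping. An alternative route that avoids Hurwitz numbers is the $\mathbb{C}^*$-virtual localisation computation on $\oM_{g,1}(\P^1,d)$ of Faber and Pandharipande, where the leg carrying $\psi_1^{2g-2}\lambda_g$ is split off and the remaining graph sum resums to $\mathcal{S}(t)$; in either case the argument is self-contained and uses nothing about strata of differentials.
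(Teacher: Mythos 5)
Your proposal is correct, but it is worth noting that the paper does not prove this proposition at all: it is imported wholesale from Faber--Pandharipande, whose proof of the (full, $n$-point) $\lambda_g$-conjecture goes through virtual localization on $\oM_{g,n}(\P^1,1)$. What you give is an honest proof of the one-point case actually used here, by a genuinely different and well-known route: ELSV for one-part Hurwitz numbers plus the character theory of hooks. I checked your normalizations and they are consistent: with $H_g(d)=\tfrac{1}{d\cdot d!}\sum_{r=0}^{d-1}(-1)^r\binom{d-1}{r}\bigl(\tfrac{d(d-1-2r)}{2}\bigr)^{2g-1+d}$ (connectedness being automatic since the monodromy contains a $d$-cycle, and $|{\rm Aut}(\mu)|=1$ for a one-part partition), one indeed gets $\tfrac{d!}{d^d(2g-1+d)!}H_g(d)=\tfrac{d^{2g-2}}{2^{2g}}[z^{2g}](\sinh z/z)^{d-1}$, and extracting the coefficient of $d^{2g-2}$ from both polynomials in $d$ gives $(-1)^gb_g=2^{-2g}[z^{2g}]\tfrac{z}{\sinh z}$, hence $b_g=[t^{2g}]\mathcal{S}(t)$, which matches the stated Bernoulli closed form. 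The trade-off between the two routes: yours is elementary modulo ELSV (itself a deep theorem, but one with several independent proofs, so there is no circularity) and it produces directly the generating-function form $\sum b_gt^{2g}=\mathcal{S}(t)$ that the paper actually feeds into Theorems~\ref{th:reform1} and~\ref{th:reform2}; the localization proof of Faber--Pandharipande gives more (all $n$-point $\lambda_g$ integrals) but none of that extra strength is needed here. Either way the result is independent of anything about strata of differentials, so there is no hidden dependency on the rest of the paper.
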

In particular, we have
$$ \mathcal{S}(t)= 1+ \sum_{g>0}  b_g t^{2g}
$$
(where we recall that $\mathcal{S}(t)=\frac{t/2}{\sin(t/2)}$).

\subsection{Stable differentials}

The main tool to prove Theorem~\ref{th:main} will be the induction formula established in~\cite{Sau} to compute the cohomology classes $[\overline{A}_{g}(Z)]$ in $H^*(\P\oH_{g,n})$. We will state a simplified version of this induction formula because we only need to compute the class $\lambda_g\alpha^0_g(Z)\in H^*(\oM_{g,n})$.  The notation of the present text will also be slightly lighter than the one of~\cite{Sau}.

\begin{mydef}
Let $P=(p_1,\ldots,p_m)$ be a vector of positive integers. The {\em space of stable differentials} $\oH_{g,n,P}$ is the space whose geometric points are tuples $(C,x_1,\ldots, x_{n+m},\alpha)$ where 
\begin{itemize}
\item $(C,x_1,\ldots, x_{n+m})$ is a pre-stable curve (i.e. a nodal curve with distinct marked points in the smooth locus);
\item $\alpha$ is a meromorphic differentials with poles of order $(p_i+1)$ at $x_{n+i}$ for all $1\leq i\leq m$;
\item there are finitely many automorphisms of $C$ preserving $\alpha$. 
\end{itemize} 
\end{mydef}
Let $p  :\oH_{g,n,P}  \to \oM_{g,n+m}$ be the forgetful map of the differential. The space $\oH_{g,n,P}$ is naturally equipped  with a structure of cone over $\oM_{g,n+m}$. In particular it has a projectivization $\P\oH_{g,n,P}$. The rank of this cone is $m+|P|+g-1$ (if $P$ is not empty).

\begin{mydef} Let $Z=(k_1,\ldots,k_n)$ be a vector of non-negative integers. We denote by $\mathcal{A}_{g}(Z,P)\subset \P\H_{g,n,P}$ the locus of differentials with zeros of order $k_i$ at $x_i$ for all $1\leq i\leq n$ and residues equal to zero at the poles. We denote by $\oA_{g}(Z,P)$ its Zariski closure.
\end{mydef}

Later, we will need the following lemma in genus 0.
\begin{mylem}
We suppose that $g=0$ and $Z=(k)$. If $k+1\neq  |P|$, then we have
$$p_*\left[\oA_0\left((k),P\right)\right]=0\in H^*(\oM_{0,m+1}).$$ 
If $k+1= |P|$, then this class is equal to $1\in H^0(\oM_{0,m+1})$.
\end{mylem}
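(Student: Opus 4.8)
The plan is to describe the fibres of $p\colon\oA_0((k),P)\to\oM_{0,m+1}$ over smooth curves, using that in genus $0$ a differential with all residues zero is exact. Fix a smooth curve $(C,x_1,\dots,x_{m+1})$, identify $C$ with $\P^1$, and choose an affine coordinate $z$ with $x_1=\infty$. A point of $\A_0((k),P)$ over this curve is the class of a meromorphic differential $\omega$ on $\P^1$ with poles of order exactly $p_i+1$ at $x_{1+i}$ and no other poles, all residues zero, and a zero of order $\geq k$ at $x_1$. Since every residue vanishes, $\omega=df$ for a rational function $f$ determined up to an additive constant; $f$ has poles of order exactly $p_i$ at the $x_{1+i}$, is regular elsewhere, and $df$ vanishing to order $\geq k$ at $\infty$ means $f-f(\infty)=O(z^{-k-1})$. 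The single computation driving the argument is that, after clearing denominators, $g:=\bigl(f-f(\infty)\bigr)\prod_{i=1}^m(z-x_{1+i})^{p_i}$ is a polynomial of degree $\leq|P|-1-k$, and conversely every such $g$ gives an admissible $f$ by $f=g\cdot\prod_i(z-x_{1+i})^{-p_i}$. So, over every smooth genus-$0$ curve, the fibre of $\A_0((k),P)$ is a dense open subset of the projectivisation of the space of polynomials $g$ of degree $\leq|P|-1-k$; that space has dimension $\max(0,|P|-k)$, so the fibre has dimension $|P|-k-1$ when $|P|>k$ and is empty when $|P|\leq k$.

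Next I would dispose of the two cases $k+1\neq|P|$, which both give $0$. If $k+1>|P|$, i.e. $k\geq|P|$, the fibre is empty over every smooth curve, so $\A_0((k),P)$ is empty, hence so is its Zariski closure, and $p_*[\oA_0((k),P)]=0$. If $k+1<|P|$, the fibres of $p\colon\A_0((k),P)\to\oM_{0,m+1}$ are non-empty of dimension $|P|-k-1>0$ (one also notes the generic $g$ does not vanish at the $x_{1+i}$, so the pole orders are exactly $p_i+1$), whence $\dim\oA_0((k),P)=\dim\oM_{0,m+1}+(|P|-k-1)$ is strictly larger than $\dim\oM_{0,m+1}$; the pushforward of its fundamental class then lies in a homology group of $\oM_{0,m+1}$ above the real dimension of the latter, and is therefore zero.

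For the remaining case $k+1=|P|$, the degree bound becomes $\deg g\leq0$, so $g$ is a nonzero constant and $f=f(\infty)+c\prod_i(z-x_{1+i})^{-p_i}$ with $c\in\C^*$; hence $\omega=df$ is uniquely determined up to scaling, and a short check shows it has poles of order exactly $p_i+1$ at the $x_{1+i}$ and a zero of order exactly $k$ at $x_1$ (its remaining $m-1$ zeros lying away from the marked points). Thus $p\colon\oA_0((k),P)\to\oM_{0,m+1}$ is a generically injective morphism between irreducible varieties of the same dimension $m-2$, hence birational, and $p_*[\oA_0((k),P)]=[\oM_{0,m+1}]=1\in H^0(\oM_{0,m+1})$.

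The only genuinely delicate point is the dimension bookkeeping behind the first paragraph: one must check that ``residues zero at the poles'' imposes exactly $m-1$ independent conditions on the $(|P|+m-1)$-dimensional space of differentials with poles bounded by $\sum_i(p_i+1)x_{1+i}$ on a smooth genus-$0$ curve (the residue map surjects onto the hyperplane of residue vectors of zero sum, its kernel being the exact differentials $dL(\sum_i p_i x_{1+i})$), and, in the case $k+1=|P|$, that the unique admissible $\omega$ occurs with multiplicity one in the fibre. Both follow directly from the bijection between admissible $f$ and polynomials $g$ described above, so I do not expect a real obstacle---only some care with the cone structure of $\P\oH_{0,1,P}$ and with the fact that $\oA_0((k),P)$ is defined as a Zariski closure.
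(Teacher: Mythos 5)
Your proof is correct and follows essentially the same route as the paper: a fibre-dimension count over $\oM_{0,m+1}$ to kill the class for degree reasons when $k+1<|P|$, exactness of residue-free genus-$0$ differentials to get emptiness for $k+1>|P|$, and the explicit function $\prod_i(z-x_{1+i})^{-p_i}$ to see the map is birational when $k+1=|P|$. Your explicit description of the fibre as (an open dense subset of) the projectivised space of polynomials $g$ of degree $\leq |P|-1-k$ is in fact slightly more careful than the paper's wording, since it cleanly covers the borderline cases $k=|P|$ and $k=|P|+1$ as well.
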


\begin{proof}
The locus $\oA_0((k),P)$ is of codimension $k+m-1$ in $\P\oH_{0,1,P}$ which is of relative dimension $m+|P|-2$ over $\oM_{0,1+m}$. Thus the class $p_*[\oA_0((k),P)]$ vanishes if $k+1< |P|$.

Besides, any meromorphic differential without residues on a genus 0 curve is exact. In other words any such differential can be integrated to get a meromorphic function of degree $d=|P|$. Thus $\A_0((k),P)$ is empty if $k-1>d=| P|$.

Finally, if $k-1= | P|$ then $p_*[\A_{0}((k),P)]=1$. Indeed, for every curve in $\M_{0,1+m}$ there exists exactly one differential up to scale with a zero of order $k$ at $x_1$ and poles (without residue) of orders prescribed by $P$ at the other marked points: it is the derivative of the function
$$
\frac{1}{\prod_{i=1}^m (z-z_{i+1})^{p_i}}
$$
(the first marked point has coordinate $z_1=\infty$).
\end{proof}

\subsection{Twisted graphs}

Let $m$ be a positive integer. We denote by  ${\rm Part}(g)_m$ the set of vectors ${\underline{g}}= (g_0,g_1\ldots,g_m)$ of nonnegative integers such that $|{\underline{g}}|=g$ and $g_i\neq 0$ for all $1\leq i\leq m$. Given such vector ${\underline{g}}= (g_0,g_1\ldots,g_m)$ we denote by $\H_{\underline{g}}\subset \oH_{g,1}$ the space of tuples $(C,x_1,\alpha)$ such that:
\begin{itemize}
\item the dual graph of $C$ is the following
$$
\xymatrix@=1.5em{
*+[Fo]{g_1}\ar@{-}[rrd]& *+[Fo]{g_2} \ar@{-}[rd]  & \ldots & *+[Fo]{g_m}\ar@{-}[ld] \\ 
&&*+[Fo]{g_0},
}
$$
and the marked point $x_1$ lies on the  component of genus $g_0$.
\item  $\alpha$ is identically $0$ on the component of genus $g_0$ and non identically 0 on all the other ones.
\end{itemize}
We denote by $\oH_{\underline{g}}$ the closure of $\H_{\underline{g}}$. We introduce the following space
$$\oH_{\underline{g}}= \oH_{g_0,m+1}\times \prod_{j=1}^m \oH_{g_i,1} \hspace{8pt} \text{and} \hspace{8pt} \oM_{\underline{g}}= \oM_{g_0,m+1}\times \prod_{j=1}^m \oM_{g_i,1}.
$$
We have two natural gluing maps $\zeta_{\underline{g}} : \oM_{\underline{g}}\to \oM_{g,1}$ and $\zeta^{\#}_{\underline{g}}: \oH_{\underline{g}}\to \oH_{g,1}$ (in fact the space $\oH_{\underline{g}}$ is the pull-back of the Hodge bundle under $\zeta_{\underline{g}}$). 
\bigskip

Now, we fix a choice of $\underline{g}=(g_0,g_1\ldots,g_m)\in {\rm Part}(g)_m$ and an integer $k\geq 0$. 
\begin{mydef}\label{def:twist} A {\em twist} for the pair $(\underline{g},k)$ is a    vector $I=(i_1,\ldots, i_m)$ of positive integers such that 
$$k\geq g_0 - 1 + |I|.$$  
The {\em multiplicity} $m(I)$ of the twist $I$ is the product of its entries. 
\end{mydef} 
Given $\underline{g}$, $k$, and $I$ we construct the locus $\A_{\underline{g},k,I}\subset \P\H_{\underline{g}}$ of tuples $(C,x_1,\alpha)$ such that:
\begin{itemize}
\item the differential $\alpha$ has a zero of order $(i_i-1)$ at the node of the component of genus $g_i$ for all $1\leq i\leq m$;
\item the component $C_0$ of genus $g_0$ of the curve lies in $p(A_{g_0,(k),I})$ where $p:\oH_{g,1,I}\to \oM_{g,1}$ is the forgetful map (i.e. in the image of differentials with poles prescribed by $I$ at the nodes and a zero of order $k$  at the marked point). 
\end{itemize}
We denote by $\oA_{\underline{g},k,I}$ the Zariski closure of $\A_{\underline{g},k,I}$ in $\P\oH_{g,1}$.
\begin{mypr}[\cite{Sau}, Proposition 3.21]\label{pr:expr}
The Poincar\'e-dual class of $\oA_{\underline{g},k,I}$ is equal to 
$$ (\xi^{g_0}+ \lambda_1^0 \xi^{g_0-1}+\ldots+ \lambda_{g_0}^0)\cdot \zeta_{\underline{g}*}^{\#} \left( p^* p_*[\oA_{g_0,(k), I}]  , \prod_{j=1}^m [\oA_{g_j}(i_j-1)]  \right)$$
where $\lambda_i^0= p^*( \zeta_{\underline{g}*} (\lambda_i ,\underset{m\times}{\underbrace{1,1,\ldots, 1}}))$ (we recall that $\zeta_{\underline{g}*}$ goes from the cohomology ring $H^*(\oM_{\underline{g}})\simeq H^*(\oM_{g_0,m+1}) \bigotimes_{j=1}^{m} H^*(\oM_{g_j,1})$ to $H^*(\oM_{g,n})$).
\end{mypr}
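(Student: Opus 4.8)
The plan is to describe the Poincaré-dual class of $\oA_{\underline{g},k,I}$ inside $\P\oH_{g,1}$ by analyzing the geometry of the locus $\oH_{\underline{g}}$ on which it sits, and then expressing the class in terms of data that lives on the factors of the product $\oM_{\underline{g}} = \oM_{g_0,m+1}\times\prod_{j=1}^m\oM_{g_j,1}$. First I would observe that $\oA_{\underline{g},k,I}$ is, by construction, contained in the projectivized cone $\P\oH_{\underline{g}}$, and that under the gluing map $\zeta_{\underline{g}}^{\#}:\oH_{\underline{g}}\to\oH_{g,1}$ the differential $\alpha$ splits as a tuple: it vanishes identically on the genus-$g_0$ component, so its restriction there carries no information (which is why we get the Chern polynomial factor $\xi^{g_0}+\lambda_1^0\xi^{g_0-1}+\cdots+\lambda_{g_0}^0$ — this is precisely the relation $\xi^{g_0}=-\lambda_1^0\xi^{g_0-1}-\cdots-\lambda_{g_0}^0$ coming from the splitting principle applied fiberwise to the rank-$g_0$ subbundle sitting over the genus-$g_0$ vertex), while on each genus-$g_j$ component it restricts to a genuine (nonzero) abelian differential with a zero of order $i_j-1$ at the attaching node. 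The constraint on the genus-$g_0$ component is that, after integrating, it extends to a differential on $C_0$ with poles of orders $i_j+1$ (without residue) at the $m$ nodes and a zero of order $k$ at $x_1$; this is exactly the condition defining $\oA_{g_0,(k),I}$ inside the space of stable differentials $\P\oH_{g_0,1,I}$, whose pushforward to $\oM_{g_0,m+1}$ is $p_*[\oA_{g_0,(k),I}]$.

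The key steps, in order, are: (1) identify the normal bundle / excess intersection contribution so that the class $[\oA_{\underline{g},k,I}]$ is computed as the pushforward under $\zeta_{\underline{g}}^{\#}$ of a product class on $\oH_{\underline{g}}$; (2) on $\oH_{\underline{g}}=\oH_{g_0,m+1}\times\prod_j\oH_{g_j,1}$, observe that the locus cuts out, on the $j$-th Hodge-bundle factor, the closure $\oA_{g_j}(i_j-1)$ of the stratum of differentials with a single zero of order $i_j-1$, and on the genus-$g_0$ factor the pullback $p^*p_*[\oA_{g_0,(k),I}]$ of the pushforward of the marked-stratum class — here one must check that the vanishing of the residues is automatically enforced, or rather that the relevant component of the fiber product is the one where residues vanish, which is where the genus-$0$ lemma proved just above and the twist inequality $k\geq g_0-1+|I|$ enter; (3) account for the multiplicity $m(I)=\prod i_j$, which arises from the local intersection multiplicity along each node where a differential with a zero of order $i_j-1$ on one side is glued to a pole of order $i_j+1$ on the other — the gluing parameter contributes with multiplicity $i_j$ (this is the standard "order of tangency" computation for plumbing families of differentials); (4) assemble these factors and push forward via $\zeta_{\underline{g}}^{\#}$, using that $\oH_{\underline{g}}$ is the pullback of the Hodge bundle along $\zeta_{\underline{g}}$ so that $\xi$ and the $\lambda_i^0$ behave compatibly, to obtain the stated formula.

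The main obstacle I expect is step (1)–(3) together: correctly identifying which irreducible component of the relevant (fiber-product) locus actually contributes and with what multiplicity. The subtlety is that the naive intersection of the boundary divisor with the stratum closure is not transverse — the differential is forced to degenerate in a prescribed way along the node — so one needs a careful local model (a plumbing/normal-cone analysis, exactly as in~\cite{Sau}) to see that the excess contribution is exactly the Chern polynomial $\xi^{g_0}+\lambda_1^0\xi^{g_0-1}+\cdots+\lambda_{g_0}^0$ of the genus-$g_0$ Hodge bundle and that the node-smoothing multiplicity is $\prod_j i_j$. A secondary technical point is ensuring the residue-free condition on the genus-$g_0$ side is the correct one to impose and that it is a closed condition compatible with taking Zariski closures; since the proposition is quoted from~\cite{Sau} (Proposition~3.21), I would follow that argument, streamlining it to the case of a single vertex $g_0$ attached to $m$ "rational tail"-type legs of positive genus $g_j$ with one zero each, which is all that is needed for the induction in Theorem~\ref{th:main}.
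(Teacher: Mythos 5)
First, be aware that the paper contains no proof of this proposition: it is imported verbatim from \cite{Sau} (Proposition 3.21), so your reconstruction can only be measured against the statement itself and the way it is used afterwards. Your global picture is the right one --- $\oA_{\underline{g},k,I}$ sits inside $\P\oH_{\underline{g}}$ in the locus where the differential vanishes on the genus-$g_0$ component, and the class is assembled from a residue-free meromorphic stratum on the $\oM_{g_0,m+1}$ factor and the strata $\oA_{g_j}(i_j-1)$ on the other factors, then pushed forward along the gluing map. But two of your specific identifications are wrong. The factor $\xi^{g_0}+\lambda_1^0\xi^{g_0-1}+\cdots+\lambda_{g_0}^0$ is \emph{not} ``the relation coming from the splitting principle'': the splitting-principle relation holds only for the full rank-$g$ bundle, $\xi^{g}+\lambda_1\xi^{g-1}+\cdots+\lambda_g=0$, and if your claimed relation $\xi^{g_0}=-\lambda_1^0\xi^{g_0-1}-\cdots-\lambda_{g_0}^0$ held, the factor would be identically zero and the whole formula would be trivial. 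The correct reading is that this polynomial is the top Chern class of $\H_{g_0}\otimes \O(1)$ on $\P\oH_{\underline{g}}$, i.e.\ the Poincar\'e dual of the projectivized sub-bundle $\P\bigl(\bigoplus_{j}\H_{g_j}\bigr)\subset \P\oH_{\underline{g}}$, which cuts out the codimension-$g_0$ condition ``$\alpha$ vanishes identically on the genus-$g_0$ component''. Second, the multiplicity $m(I)=\prod_j i_j$ that you propose to ``account for'' in your step (3) does not belong in this proposition at all: the stated class of $\oA_{\underline{g},k,I}$ carries no such factor, and $m(I)$ enters only as the coefficient of $\alpha^0_{\underline{g},k,I}$ in the induction formula of Proposition~\ref{induction}. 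Folding $m(I)$ into the class here would prove a statement contradicting the one you set out to prove, or lead to double counting downstream.

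Beyond these two points, what you have written is a plan rather than a proof: the actual content of the proposition is exactly the local analysis you defer in your steps (1)--(3), namely that the relevant intersection with the boundary decomposes into these product components and that the resulting class is the product written above with no further excess terms; this requires the plumbing/normal-cone model of \cite{Sau} and is not supplied by the heuristics you give. Since the paper itself cites rather than proves the result, deferring to \cite{Sau} is acceptable, but the two identifications above should be corrected before the outline can be called a faithful reconstruction.
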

We denote by $\alpha_{\underline{g},k,I}^0\in H^*(\oM_{g,1})$ the degree 0 coefficient (in $\xi$) of $\oA_{\underline{g},k,I}$. The above lemma implies the following expression for $\alpha_{\underline{g},k,I}^0$.
\begin{mylem}\label{lemdeg0}
The following equality holds in $H^{2k+2}(\oM_{g,1})$:
$$
\alpha_{\underline{g},k,I}^0= \zeta_{\underline{g}*} \left( \lambda_{g_0}p_*[\oA_{g_0,(k), I}] , \prod_{j=1}^m \alpha^0_{g_j}(i_j-1) \right).
$$
As a consequence $\lambda_g \cdot \alpha_{\underline{g},k,I}^0=0$ if $g_0\neq 0$.
\end{mylem}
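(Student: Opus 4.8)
The plan is to read off the coefficient of $\xi^{0}$ in the formula of Proposition~\ref{pr:expr} and then deduce the vanishing statement from Mumford's relation (Lemma~\ref{lemmum}).

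For the first part, I would exploit three structural facts about the gluing map. Since $\oH_{\underline g}$ is the pull-back of the Hodge bundle along $\zeta_{\underline g}:\oM_{\underline g}\to\oM_{g,1}$, the space $\P\oH_{\underline g}$ is the fibre product $\oM_{\underline g}\times_{\oM_{g,1}}\P\oH_{g,1}$; hence its canonical class $\xi_{\underline g}$ is the pull-back of $\xi$, and base change gives $\zeta^{\#}_{\underline g*}\,q_{\underline g}^{*}=q^{*}\,\zeta_{\underline g*}$, where $q$ and $q_{\underline g}$ denote the projections to the moduli of curves. On $\P\oH_{\underline g}$ one has the direct-sum decomposition $\zeta_{\underline g}^{*}\oH_{g,1}=\oH_{g_0,m+1}\oplus\bigoplus_{j}\oH_{g_j,1}$, and the polynomial prefactor appearing in Proposition~\ref{pr:expr} is the class of the sub-projective-bundle $\P\big(\bigoplus_{j}\oH_{g_j,1}\big)\subset\P\oH_{\underline g}$, i.e. $\sum_{a=0}^{g_0}c_a(\oH_{g_0,m+1})\,\xi_{\underline g}^{\,g_0-a}$, whose $\xi_{\underline g}^{0}$-coefficient is $c_{g_0}(\oH_{g_0,m+1})=\lambda_{g_0}$. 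Finally, each input class expands as $[\oA_{g_j}(i_j-1)]=\sum_{l\ge 0}\xi^{l}\alpha^{l}_{g_j}(i_j-1)$ with the $\alpha^{l}_{g_j}(i_j-1)$ pulled back from $\oM_{g_j,1}$, while $p^{*}p_{*}[\oA_{g_0,(k),I}]$ has pure $\xi$-degree $0$. Putting these together, the projection formula shows that the $\xi^{0}$-coefficient of the right-hand side of Proposition~\ref{pr:expr} is $\zeta_{\underline g*}\big(\lambda_{g_0}\,p_{*}[\oA_{g_0,(k),I}],\ \prod_{j}\alpha^{0}_{g_j}(i_j-1)\big)$, which is the asserted identity.

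For the consequence, I would apply the projection formula to $\lambda_g\cdot\alpha^{0}_{\underline g,k,I}$. Because $\lambda_g=c_g(\oH_{g,1})$ and $\zeta_{\underline g}^{*}\oH_{g,1}$ is a direct sum of Hodge bundles of ranks $g_0,g_1,\dots,g_m$ adding to $g$, the top Chern class of the sum equals the product of the top Chern classes of the summands, so $\zeta_{\underline g}^{*}\lambda_g=\lambda_{g_0}\cdot\prod_{j=1}^{m}\lambda_{g_j}$. Pulling this into the push-forward, the genus-$g_0$ slot of the argument of $\zeta_{\underline g*}$ picks up the extra factor $\lambda_{g_0}\cdot\lambda_{g_0}=\lambda_{g_0}^{2}$, which vanishes in $H^{*}(\oM_{g_0,m+1})$ by Lemma~\ref{lemmum} as soon as $g_0\ge 1$. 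Hence $\lambda_g\cdot\alpha^{0}_{\underline g,k,I}=0$ whenever $g_0\ne 0$.

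I expect the only genuinely delicate point to be the bookkeeping in the first paragraph: checking carefully that $\zeta^{\#}_{\underline g*}$ respects the $\xi$-grading as claimed and that the polynomial prefactor is absorbed into the gluing push-forward contributing precisely its constant term $\lambda_{g_0}$. The vanishing statement is then an immediate application of Mumford's relation, with no further input needed.
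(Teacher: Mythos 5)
Your proof is correct and follows essentially the same route as the paper: the first identity is obtained by extracting the $\xi^{0}$-coefficient from Proposition~\ref{pr:expr} (the paper simply asserts this), and the vanishing comes from $\zeta_{\underline g}^{*}\lambda_g=\prod_{j=0}^{m}\lambda_{g_j}$ together with $\lambda_{g_0}^{2}=0$ from Lemma~\ref{lemmum}. Your extra bookkeeping on the compatibility of $\zeta^{\#}_{\underline g*}$ with the $\xi$-grading just makes explicit what the paper leaves implicit.
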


\begin{proof}
The first part of the lemma follows from Proposition~\ref{pr:expr}. The second part is a consequence of the decomposition
$$
\lambda_g \cdot \zeta_{\underline{g}*} (1,\ldots,1)= \zeta_{\underline{g}*} \bigg(\prod_{j=0}^m \lambda_{g_j}\bigg).
$$
Indeed, this  implies the equality
\begin{equation}\label{lambdafor}
\lambda_g \cdot \alpha_{\underline{g},k,I}^0= \zeta_{\underline{g}*} \left( \lambda_{g_0}^2 p_*[\oA_{g_0,(k), I}] , \prod_{j=1}^m \lambda_{g_j} \alpha^0_{g_j}(i_j-1) \right).
\end{equation}
Now the second part of the lemma follows from Lemma~\ref{lemmum}: the class $\lambda_{g_0}^2=0$ if $g_0>0$. 
\end{proof}

\subsection{Induction formula for $a_g$'s} The main tool to compute the $a_g$'s and $d_g$'s is the following proposition.
\begin{mypr}[\cite{Sau}, Theorem 4]\label{induction}
For all $g,k\geq 0$, the following equality holds in $H^*(\oM_{g,1})$:
\begin{equation}\label{indfor}
(k+1)\psi_1 \cdot \alpha^0_g(k)= \alpha^0_g(k+1) + \sum_{m\geq 1} \frac{1}{m!} \left( \sum_{\underline{g}\in {\rm Part}(g)_m} \!\! \sum_{I} m(I) \alpha^0_{\underline{g},k,I}\right),
\end{equation}
where the right hand sum runs over all $\underline{g}$ and all possible twists for the pair $(\underline{g},k)$ as in Definition~\ref{def:twist}.
\end{mypr}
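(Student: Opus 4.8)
The plan is to derive the formula first as an identity between the full Poincar\'e-dual classes in $H^*(\P\oH_{g,1})$ and then to read off the coefficient of $\xi^0$. Over the closure $\oA_g(k)$ the universal differential $\alpha$ vanishes to order at least $k$ at the marked point $x_1$, so its Taylor coefficient of order $k$ along the marked section $\sigma_1$ is a well-defined section $a_k$ of a line bundle on $\oA_g(k)$ whose first Chern class is $\xi+(k+1)\psi_1$ — the $\psi_1$-power being the standard jet twist and the $\xi$ coming from the tautological $\O(1)$; the normalization is pinned down by the case $k=0$, where $a_0=\alpha|_{x_1}$ is a section of $\O(1)\otimes\L_1$. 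On the smooth part of $\oA_g(k)$ the zero scheme of $a_k$ is exactly $\oA_g(k+1)$, the locus where the vanishing order at $x_1$ jumps, so, modulo contributions from the boundary of $\oM_{g,1}$ and from the singularities of $\oA_g(k)$, one expects $(\xi+(k+1)\psi_1)\,[\oA_g(k)]=[\oA_g(k+1)]$.

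To make this precise I would pass to the incidence-variety compactification of \cite{BCGGM} (or a desingularization), where $a_k$ becomes a regular section, carry out the zero-locus computation there, and push forward. The zero scheme of $a_k$ then acquires, besides $\oA_g(k+1)$, extra components supported over the boundary of $\oM_{g,1}$, and identifying them is the heart of the proof. A limiting differential of this kind lives on a curve whose dual graph is a central genus-$g_0$ vertex carrying $x_1$ attached to $m$ outer vertices of genera $g_1,\dots,g_m$; the differential is identically zero on the central component and becomes there, after rescaling, a meromorphic differential with a zero of order $k$ at $x_1$ and poles of orders $i_1+1,\dots,i_m+1$ at the nodes — a degree count forcing $k\ge g_0-1+|I|$, which is precisely Definition~\ref{def:twist} — while on the $j$-th outer component it has a zero of order $i_j-1$ at the node. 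This is exactly the locus $\oA_{\underline g,k,I}$, whose Poincar\'e-dual class is computed in Proposition~\ref{pr:expr}. The main obstacle is the multiplicity bookkeeping: a local plumbing analysis at the $j$-th node must show that $a_k$ vanishes there to order $i_j$, which produces the factor $m(I)=\prod_j i_j$, while the $\tfrac1{m!}$ accounts for the fact that $\underline g$ and $I$ are recorded as ordered tuples although the outer branches may be permuted; one also has to check that no further boundary strata occur and that everything is generically reduced.

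Granting the resulting identity
\[(\xi+(k+1)\psi_1)\,[\oA_g(k)]=[\oA_g(k+1)]+\sum_{m\ge1}\frac1{m!}\sum_{\underline g\in{\rm Part}(g)_m}\sum_I m(I)\,[\oA_{\underline g,k,I}]\]
in $H^*(\P\oH_{g,1})$, the statement of Proposition~\ref{induction} follows by extracting the coefficient of $\xi^0$ in the presentation $H^*(\P\oH_{g,1})\simeq H^*(\oM_{g,1})[\xi]/(\xi^g+\lambda_1\xi^{g-1}+\dots+\lambda_g)$: the term $(k+1)\psi_1\,[\oA_g(k)]$ contributes $(k+1)\psi_1\,\alpha^0_g(k)$, the term $[\oA_g(k+1)]$ contributes $\alpha^0_g(k+1)$, and each $[\oA_{\underline g,k,I}]$ contributes $\alpha^0_{\underline g,k,I}$; the term $\xi\cdot[\oA_g(k)]$ strictly raises the $\xi$-degree, and in the range $k\le g-2$ does not affect the $\xi^0$-coefficient at all, while for larger $k$ its contribution comes only through the reduction $\xi^g\mapsto-\lambda_1\xi^{g-1}-\dots-\lambda_g$ and is matched by the analogous reduction of the right-hand side — in any case it is killed by $\lambda_g$ thanks to Lemma~\ref{lemmum} ($\lambda_g^2=0$), so it is harmless for the later specialization to $a_g$, where by Lemma~\ref{lemdeg0} the $g_0>0$ summands drop out as well. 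This reproduces \eqref{indfor}, and is in essence the strategy of \cite{Sau}.
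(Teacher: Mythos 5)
The paper does not actually prove Proposition~\ref{induction}: it is imported verbatim from \cite{Sau} (Theorem 4), so the only meaningful comparison is with that reference. Your outline does match the strategy of \cite{Sau} in broad strokes --- the $k$-th jet of $\alpha$ at $x_1$ as a section of a line bundle of class $\xi+(k+1)\psi_1$ on $\oA_g(k)$, its zero locus consisting of $\oA_g(k+1)$ together with boundary degenerations, and the multiplicities $m(I)$ coming from a local analysis at the nodes. But as written this is a plan, not a proof: every genuinely hard step (that the listed boundary components exhaust the zero locus, that the local vanishing order at the $j$-th node is exactly $i_j$, that the zero scheme is generically reduced, and that $a_k$ extends to a regular section over a suitable compactification) is explicitly deferred with phrases like ``the heart of the proof'' and ``one also has to check''.

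More importantly, the intermediate identity you propose to ``grant'' in $H^*(\P\oH_{g,1})$,
\[
(\xi+(k+1)\psi_1)\,[\oA_g(k)]=[\oA_g(k+1)]+\sum_{m\ge1}\frac1{m!}\sum_{\underline g}\sum_I m(I)\,[\oA_{\underline g,k,I}],
\]
is false as stated. The actual induction formula of \cite{Sau} contains further correction terms indexed by bicolored graphs that are \emph{not} trees (e.g.\ configurations where a degenerate component meets the rest of the curve at two or more nodes, creating loops); the paper's own remark immediately after Proposition~\ref{induction} points this out. Those terms carry a factor $\xi^{b_1(\Gamma)}$ with $b_1(\Gamma)\ge 1$ and therefore disappear only after one extracts the $\xi^0$-coefficient --- which is precisely why \eqref{indfor} involves only the star-shaped trees of Definition~\ref{def:twist}. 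Your degeneration analysis implicitly assumes every limit has a tree dual graph, so the lifted identity you start from omits genuine components of the zero locus; the final formula \eqref{indfor} survives only because the omitted terms happen not to contribute in $\xi$-degree zero, and your argument never establishes that. The hand-waving about the $\xi\cdot[\oA_g(k)]$ term being ``matched by the analogous reduction of the right-hand side'' modulo the relation $\xi^g+\lambda_1\xi^{g-1}+\dots+\lambda_g=0$ also needs care, since the coefficients $\alpha^0$ are defined from a specific non-reduced polynomial representative produced by the algorithm of \cite{Sau}, not from the unique normal form of degree $<g$ in $\xi$.
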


\begin{remark}
Note that our definition of twisted graph only includes trees with a unique vertex where the differentials vanishes identically. In the induction formula of~\cite{Sau} the set of twisted graphs is larger (bicolored graphs). However, bicolored graphs that are not trees do not contribute to the degree 0 part of the induction. Indeed the cohomology class associated to any bi-colored graph $\Gamma$ is of the form
 $\xi^{b_1(\Gamma)} a'$ with $a'\in H^*(\oH_{g,1})$ where 
 $b_1(\Gamma)$ is the number of loops of $\Gamma$. 
\end{remark}

The induction formula of Propostion~\ref{induction}, together with Lemmas~\ref{lambdag} and~\ref{lemdeg0} implies
\begin{myth}\label{th:reform1} We have
$$
 b_g =   \sum_{k\geq 1} \frac{1}{k!(2g-k)!} \sum_{d_1+\ldots+ d_k=g}  \left(\prod_{j=1}^k  (2d_j-1) a_{d_j} \right).
$$
\end{myth}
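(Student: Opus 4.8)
The plan is to integrate the induction formula of Proposition~\ref{induction} against $\lambda_g$ over $\oM_{g,1}$ and to track how the twisted-graph contributions collapse under Lemma~\ref{lemdeg0}. First I would multiply Equation~\eqref{indfor} by $\lambda_g$ and integrate over $\oM_{g,1}$. On the left-hand side we obtain $(k+1)\int_{\oM_{g,1}}\lambda_g\,\psi_1\cdot\alpha^0_g(k)$; iterating the formula down from $k=2g-2$ (where $\alpha^0_g(2g-2)=\alpha^0_g(Z)$ with $|Z|=2g-2$, so that $\int_{\oM_{g,1}}(-1)^g\lambda_g\alpha^0_g(2g-2)=a_g$ by the Proposition preceding the $\lambda_g$-conjecture), we will express $b_g=\int_{\oM_{g,1}}\lambda_g\psi_1^{2g-2}$ in terms of the $a_d$'s. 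Concretely, the combinatorics should be organized so that $b_g$ equals a sum of $\lambda_g$-integrals of products $\psi_1^{j}\cdot\alpha^0_{\underline g,k,I}$; here I would use the $\lambda_g$-conjecture (Proposition~\ref{lambdag}) to identify the left side as $b_g$.

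Next I would invoke Lemma~\ref{lemdeg0}: since $\lambda_g\cdot\alpha^0_{\underline g,k,I}=0$ whenever $g_0\neq 0$, only partitions $\underline g=(g_0,g_1,\dots,g_m)$ with $g_0=0$ survive. In that case $\lambda_{g_0}=1$ and, by the genus-$0$ lemma, $p_*[\oA_0((k),I)]$ is nonzero (and equal to $1$) precisely when $k+1=|I|$. So the surviving twists $I=(i_1,\dots,i_m)$ are exactly those with $\sum i_j=k+1$ and each $i_j\geq 1$; note the twist condition $k\geq g_0-1+|I|=|I|-1=k$ is automatically an equality, consistent with this. Setting $d_j=g_j$, each factor contributes $\alpha^0_{g_j}(i_j-1)=\alpha^0_{d_j}(2d_j-2)$ after matching $i_j-1=2d_j-2$, i.e. $i_j=2d_j-1$, so that the constraint $\sum i_j=k+1$ becomes $\sum(2d_j-1)=k+1$. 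Pairing each $\lambda_{d_j}$ from the decomposition $\lambda_g\zeta_{\underline g*}(1,\dots,1)=\zeta_{\underline g*}(\prod\lambda_{d_j})$ with the corresponding $\alpha^0_{d_j}(2d_j-2)$ and using the gluing/projection formula on $\oM_{\underline g}=\oM_{0,m+1}\times\prod\oM_{d_j,1}$ turns each factor into $(-1)^{d_j}\!\int\lambda_{d_j}\alpha^0_{d_j}(2d_j-2)=a_{d_j}$, with the multiplicity $m(I)=\prod i_j=\prod(2d_j-1)$ producing the weight $\prod(2d_j-1)a_{d_j}$.

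Finally I would assemble the bookkeeping: the $1/m!$ from Proposition~\ref{induction} becomes a $1/k!$ once $m$ is renamed (the number of genus-$0$-stem branches equals the number of marked points on the genus-$0$ component, which plays the role of $k$ in the statement), and the iterated factors $(k+1)\psi_1,(k+2)\psi_1,\dots$ accumulated while descending from exponent $2g-2$ down to exponent $k$ produce the $(2g-k)!$ in the denominator together with the requisite power of $\psi_1$ absorbed by the $\lambda_g$-conjecture. Writing $k$ for the number of branches and $d_1+\dots+d_k=g$ for the genera of the branches (with $d_0=0$ forced), one lands on
\[
b_g=\sum_{k\geq 1}\frac{1}{k!\,(2g-k)!}\sum_{d_1+\dots+d_k=g}\ \prod_{j=1}^k(2d_j-1)a_{d_j},
\]
which is the claimed identity. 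The main obstacle I anticipate is the precise combinatorial matching of the factorials and the powers of $\psi_1$: one must carefully verify that iterating the multiplication-by-$(k+1)\psi_1$ relation, while at each stage spawning genus-$0$ twisted contributions, yields exactly the binomial-type coefficient $\binom{2g}{k}^{-1}\!/(2g)!$-shaped weight $1/(k!(2g-k)!)$ rather than something off by a symmetry factor, and that no surviving non-tree or multi-$g_0$-vertex graphs contribute (which the remark after Proposition~\ref{induction} and the $\lambda_g^2=0$ vanishing of Lemma~\ref{lemmum} are designed to rule out). Checking the base case $g=1$ against $b_1=1/24$, $a_1$ and the low-order coefficients of $\mathcal S(t)$ would serve as a sanity check on the constants.
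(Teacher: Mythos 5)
Your proposal follows essentially the same route as the paper's proof: multiply the induction formula of Proposition~\ref{induction} by $\lambda_g$, use Lemma~\ref{lemdeg0} to kill all terms with $g_0\neq 0$, then the genus-$0$ lemma and dimension counting to force $|I|=k+1$ and $i_j=2g_j-1$ (so $m(I)=\prod_j(2g_j-1)$), and finally telescope over $k$ against powers of $\psi_1$, invoking the string equation and the $\lambda_g$-conjecture. The factorial bookkeeping you flag as the remaining obstacle works out exactly as you predict (the accumulated factors give $(2g-2)!/(2g-k)!$, and the $k=1$ term reproduces the lone $a_g$), and your only slip is an immaterial sign in the identification $\int_{\oM_{d_j,1}}\lambda_{d_j}\alpha^0_{d_j}(2d_j-2)=a_{d_j}$, where the paper's own sign conventions are equally loose.
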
 

\begin{remark}
This theorem is a reformulation of Formula~\eqref{mainfor} of Theorem~\ref{th:main}. Let us outline the strategy of the proof.
\begin{itemize}
\item The starting point of the proof is the induction formula of Proposition~\ref{induction} for the classes $\alpha_g^0(k)$: for all $k$, we multiply this formula by $\lambda_g$ to reduce the summation on the RHS.
\item Then we identify the numerical contribution of each term in the reduced sum for all $k$.
\item We conclude by summing the contributions for all values of $k$ between $0$ and $2g-3$. 
\end{itemize}
\end{remark}

\begin{proof}
We have seen that $a_g=  \int_{\oM_{g,1}} \lambda_g \alpha_g^0(2g-2)$. Thus we begin by multiplying equation~\eqref{indfor} by $\lambda_g$ to obtain
\begin{equation}\label{lambdapsifor}
(k+1)\psi_1 \cdot \lambda_g \alpha^0_g(k)= \lambda_g \alpha^0_g(k+1) + \sum_{\underline{g},I, g_0=0} \frac{m(I)}{(\ell(\underline{g})-1)!} \lambda_g \alpha^0_{\underline{g},k,I},
\end{equation}
where the right-hand sum runs over all $\underline{g}$ with $g_0=0$ and all possible twists. Indeed Lemma~\ref{lemdeg0} implies that the terms with $g_0\neq 0$ vanish.

Therefore we need to compute the intersection numbers
$$
\int_{\oM_{g,1}} \! \! \! \psi_1^{2g-3-k} \lambda_g \alpha^0_{\underline{g}, k, I}.
$$
We have seen (Formula~\eqref{lambdafor}) that
$$\lambda_g \cdot \alpha_{\underline{g},k,I}^0= \zeta_{\underline{g}*} \left(  p_*[\oA_{0,(k), I}] , \prod_{j=1}^m \lambda_{g_j} \alpha^0_{g_j}(i_j-1) \right).
$$
Besides the class $p_*[\oA_{g_0,(k), I}]$ is equal to $1$ if $k+1=|I|$ and $0$ otherwise. Therefore, if $k+1=|I|$ we have
$$
\int_{\oM_{g,1}} \! \! \! \psi_1^{2g-1-k} \lambda_g \alpha^0_{\underline{g}, k, I}= \left(
 \int_{\oM_{0,m+1}}\! \! \!  \psi_1^{2g-3-k} \right) \times \left( \prod_{j=1}^m \int_{\oM_{g_j,1}} \! \! \! \lambda_{g_j} \alpha^0_{g_j}(i_j-1) \right).
$$
For all $1\leq j\leq n$, the class $ \lambda_{g_j} \alpha^0_{g_j}(i_j-1)$ is not in top degree if $i_j-1\neq 2g_j-2$ thus  we get
\begin{equation} 
\int_{\oM_{g_j,1}} \! \! \! \lambda_{g_j} \alpha^0_{g_j}(i_j-1) = \left\{ \begin{array}{l l}
 0 & \text{if $i_j-1\neq 2g_j-2$}\\
 (-1)^{g_0}a_{g_0} & \text{otherwise}.
\end{array} \right.
\end{equation} 
Therefore for all $\underline{g}$ with $g_0=0$, we denote by $I(\underline{g})$ the twist $(2g_1-1, 2g_2-1,\ldots, 2g_m-1)$. 

Finally, the string equation implies that $\int_{\oM_{0,m+1}}\! \! \!  \psi_1^{2g-3-k} = 1$ if $2g-3-k=m-2$ and $0$ otherwise. Thus we set $m=2g-1-k$ and we multiply equation~\eqref{lambdapsifor} by $\psi_1^{m-2}$ to obtain
\begin{eqnarray*}
\psi^{m-2}_1 \lambda_g \left[ (k+1) \psi_1 \alpha^0_g(k)- \alpha^0_g(k+1)\right]\!\! \!\!\! &=& \!  \! \! \frac{1}{m!}  \!  \sum_{\begin{smallmatrix}
\underline{g}\in  {\rm Part}(g)_{m}\\ g_0=0
\end{smallmatrix}} \! \! \! \! \! \! \! \!  {m(I(\underline{g}))} \lambda_g  \psi^{m-1}_1  \alpha^0_{\underline{g},k,I(\underline{g})}\\
&=& \!  \! \! \frac{(-1)^{g}}{m!}\! \! \! \!  \sum_{g_1+\ldots+ g_{m}=g} \!  \left(\prod_{j=1}^{m}   (2g_j-1) a_{g_j}\right).
\end{eqnarray*}
Finally we get
\begin{eqnarray*}
\!\! (2g-2)! \psi_1^{2g-2} \!\!\!\! &\!\!\!\! & \!\!\!\! \lambda_g = a_g +\\
 & \!\!\!\! & \!\!\!\! \sum_{k>1} \! \frac{(2g-2)(2g-3)\ldots (2g-k+1)}{k!} \!\!\!\! \!\!\!\!\sum_{g_1+\ldots+ g_{k}=g} \!\! \left(\prod_{j=1}^{k}   (2g_j-1) a_{g_j}\!\! \right).
\end{eqnarray*}
This implies 
$$
 \psi_1^{2g-2} \lambda_g = \sum_{k>0} \frac{1}{k!(2g-k)!} \!\!\! \sum_{g_1+\ldots+ g_{k}=g}  \left(\prod_{j=1}^{k}   (2g_j-1) a_{g_j}\right).
$$
We use Proposition~\ref{lambdag} to conclude
$$
 b_g = \sum_{k>0} \frac{1}{k!(2g-k)!} \!\!\! \sum_{g_1+\ldots+ g_{k}=g}  \left(\prod_{j=1}^{k}   (2g_j-1) a_{g_j}\right).
$$
\end{proof}

\subsection{Induction formula for the $d_g$'s} First let us recall that $\lambda_g\cdot \delta_0=0$ (see~\cite{Mum} for example). Besides, we have the following equality 
$$
\int_{\oM_{g,1}} \delta_0 \cdot \lambda_{g-1} \psi_1^{2g-2}=\frac{1}{2}\int_{\oM_{g-1,3}} \lambda_{g-1} \psi_1^{2g-2}= \frac{1}{2} b_{g-1}
$$
(see~\cite{FabPan} for example).
Moreover, if we still denote $[\oA_g(2g-2)]=\sum_{k=0}^{2g-2} \xi^{k}\alpha_k$ then we have
$$\int_{\oA_g(2g-2)}  \xi^{2g-2} \delta_0= \int_{\oM_{g,1}} (-1)^{g-1} \delta_0\lambda_{g-1}\alpha_{0}.$$

Using these equalities, we can prove the following theorem (which is equivalent to Formula~\eqref{mainforbis} in Theorem~\ref{th:main}).

\begin{myth}\label{th:reform2}
For all $g>0$, we have
$$
\frac{b_{g-1}}{2} = \sum_{g'=1}^{g} \sum_{k=0}^{g-g'} \frac{(2g'-1) d_{g'} }{k!(2g-1-k)!} \left( \prod_{g_1+\ldots+g_k=g-g'}\!\!\! \!\!\! \!\!\!  (2g_i-1)a_{g_i}\right).
$$
\end{myth}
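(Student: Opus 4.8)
The plan is to run the argument of Theorem~\ref{th:reform1} once more, but multiplying the induction formula~\eqref{indfor} by $\delta_0\lambda_{g-1}$ instead of by $\lambda_g$. Recall from the identities collected just before the statement that $d_{g'}=\int_{\oM_{g',1}}\delta_0\lambda_{g'-1}\alpha^0_{g'}(2g'-2)$ for every $g'\ge 1$, and that $\int_{\oM_{g,1}}\delta_0\lambda_{g-1}\psi_1^{2g-2}=\tfrac12 b_{g-1}$. For each $\kappa\in\{0,1,\dots,2g-3\}$ I would multiply~\eqref{indfor}, written for the class $\alpha^0_g(\kappa)$, by $\delta_0\lambda_{g-1}\psi_1^{2g-3-\kappa}$ and integrate over $\oM_{g,1}$. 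Setting $V_\kappa=\int_{\oM_{g,1}}\delta_0\lambda_{g-1}\psi_1^{2g-2-\kappa}\alpha^0_g(\kappa)$ (a top-dimensional integral), this produces an identity of the shape $(\kappa+1)V_\kappa=V_{\kappa+1}+R_\kappa$, and one observes that $V_0=\tfrac12 b_{g-1}$ (since $\alpha^0_g(0)=1$) while $V_{2g-2}=d_g$.

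The heart of the matter is identifying $R_\kappa$, i.e.\ which twisted-graph contributions $\delta_0\lambda_{g-1}\psi_1^{2g-3-\kappa}\alpha^0_{\underline g,\kappa,I}$ survive integration. I would expand $\alpha^0_{\underline g,\kappa,I}$ by Lemma~\ref{lemdeg0}, then use the projection formula for $\zeta_{\underline g}$ together with two facts about a gluing map along a \emph{tree} $\underline g=(g_0,g_1,\dots,g_m)$: the Hodge bundle splits (as in the proof of Lemma~\ref{lemdeg0}), so that $\zeta_{\underline g}^*\lambda_g=\prod_{j=0}^m\lambda_{g_j}^{(j)}$ and $\zeta_{\underline g}^*\lambda_{g-1}=\sum_{j^\ast=0}^m\lambda_{g_{j^\ast}-1}^{(j^\ast)}\prod_{j\ne j^\ast}\lambda_{g_j}^{(j)}$; and $\zeta_{\underline g}^*\delta_0=\sum_{j_0=0}^m\delta_0^{(j_0)}$, where $\delta_0^{(j)}=0$ whenever $g_j=0$. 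Using $\lambda_g\delta_0=0$ and $\lambda_{g_0}^2=0$ (Lemma~\ref{lemmum}), the surviving terms are pinned down in three steps: (i) the $\delta_0$-vertex $j_0$ and the ``defect'' vertex $j^\ast$ must coincide, since otherwise $\delta_0^{(j_0)}$ meets the top class $\lambda_{g_{j_0}}^{(j_0)}$, which is killed by $\lambda_g\delta_0=0$; (ii) this common vertex cannot be the central vertex $0$, because $\alpha^0_{\underline g,\kappa,I}$ already carries a $\lambda_{g_0}$ on that vertex and $\lambda_{g_0}\delta_0^{(0)}=0$; (iii) therefore the central vertex receives the \emph{top} class $\lambda_{g_0}^{(0)}$, so $\lambda_{g_0}^2=0$ forces $g_0=0$. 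After that, a degree count on each factor (exactly as in Theorem~\ref{th:reform1}) forces $i_j=2g_j-1$ for every $j$.

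For such a term the genus-$0$ computation gives $p_*[\oA_{0,(\kappa),I}]=1$ precisely when $\kappa+1=|I|=\sum_j(2g_j-1)=2g-m$, i.e.\ $m=2g-\kappa-1$, and then $\int_{\oM_{0,m+1}}\psi_1^{2g-3-\kappa}=\int_{\oM_{0,m+1}}\psi_1^{m-2}=1$ by the string equation; the distinguished leaf $j_0$ contributes $\int_{\oM_{g_{j_0},1}}\delta_0\lambda_{g_{j_0}-1}\alpha^0_{g_{j_0}}(2g_{j_0}-2)=d_{g_{j_0}}$, and every other leaf contributes $a_{g_j}$. Collecting the multiplicity $m(I)=\prod_j(2g_j-1)$, the coefficient $1/m!$ of~\eqref{indfor}, and the $m$ choices of $j_0$ (which, after symmetrising the ordered tuple $(g_1,\dots,g_m)$, absorbs one factorial, leaving $1/(m-1)!$), and writing $k:=m-1=2g-2-\kappa$ for the number of non-distinguished leaves, I obtain $(\kappa+1)V_\kappa=V_{\kappa+1}+R_\kappa$ with
\[
R_\kappa=\frac{1}{k!}\sum_{g'=1}^{g}(2g'-1)\,d_{g'}\!\!\!\sum_{g_1+\cdots+g_k=g-g'}\ \prod_{j=1}^{k}(2g_j-1)a_{g_j}.
\]
Dividing the $\kappa$-th identity by $(\kappa+1)!=(2g-1-k)!$ and telescoping over $\kappa=0,\dots,2g-3$ (equivalently $k=2g-2,\dots,1$), the left-hand side collapses to $V_0-\dfrac{V_{2g-2}}{(2g-2)!}=\dfrac{b_{g-1}}{2}-\dfrac{d_g}{(2g-2)!}$, and the right-hand side becomes $\sum_{k=1}^{2g-2}\sum_{g'=1}^{g}\dfrac{(2g'-1)d_{g'}}{k!\,(2g-1-k)!}\sum_{g_1+\cdots+g_k=g-g'}\prod_j(2g_j-1)a_{g_j}$. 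Moving $d_g/(2g-2)!$ back to the right as the $k=0$ term (the inner sum being empty, hence $0$, unless $k\le g-g'$) gives exactly the asserted identity.

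The main obstacle is the reduction of the second paragraph: tracking how the single ``defect'' of $\lambda_{g-1}$ spreads over the tree and interacts with $\delta_0$, and establishing in a precise form the two structural facts $\zeta_{\underline g}^*\lambda_\bullet=\prod_j\lambda_\bullet^{(j)}$ and $\zeta_{\underline g}^*\delta_0=\sum_j\delta_0^{(j)}$ for boundary trees (both classical, e.g.\ from~\cite{Mum}). Once these are granted, the remaining computation is the bookkeeping of the proof of Theorem~\ref{th:reform1} essentially verbatim, with the single modification that exactly one leaf of the tree now carries $\delta_0$ and hence contributes a factor $(2g'-1)d_{g'}$ in place of $(2g'-1)a_{g'}$.
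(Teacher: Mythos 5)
Your proposal is correct and follows essentially the same route as the paper: multiply the induction formula~\eqref{indfor} by $\delta_0\lambda_{g-1}$, use $\lambda_g\delta_0=0$, $\lambda_{g_0}^2=0$ and the splitting of $\lambda$-classes and $\delta_0$ over the boundary tree to show only terms with $g_0=0$, $I=I(\underline g)$ and one distinguished leaf carrying $\delta_0\lambda_{g_j-1}$ (hence a factor $d_{g_j}$) survive, then telescope over $\kappa$. Your three-step vanishing argument and the bookkeeping of the $1/(m-1)!$ factor match the paper's computation; you are in fact somewhat more explicit than the paper, which compresses the vanishing analysis into ``by the same arguments as before.''
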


\begin{proof}
We follow the same strategy as in the proof of Theorem~\ref{th:reform1}: we will multiply the induction formula of Proposition~\ref{induction} by $\lambda_{g-1}\delta_0$ and then identify the numerical contributions of the sum in the RHS.

First we fix some $\underline{g}\in {\rm Part}_m(g)$. We recall that we have
$$
(\delta_0\lambda_{g-1})\cdot  \zeta_{\underline{g}}(1,\ldots,1)= \sum_{j=0}^m \zeta_{\underline{g}}(\lambda_{g_0},\ldots,\delta_0 \lambda_{g_j-1},\ldots, \lambda_{g_m}).
$$
 Now we fix $k\geq 0$ and a twist $I$ for $\underline{g}$.  We study the intersection number
$$\psi_1^{2g-3-k} \delta_0 \lambda_{g-1} \alpha_{\underline{g},k,I}^0.$$
By the same arguments as before, this intersection number vanishes if $g_0\neq 0$, or if $I\neq I(\underline{g})$, or if $2g-3-k\neq m-2$. Besides if $g_0=0$, $I=I(\underline{g})$, and $m=2g-1-k$ then
$$ \psi_1^{2g-3-k} \delta_0 \lambda_{g-1} \alpha_{\underline{g},k,I}^0= \sum_{j=1}^m (2g_j-1) d_{g_j} \left( \prod_{j'\neq j} (2g_{j'}-1)a_{g_{j'}} \right).
$$
Thus, using the induction Formula~\eqref{indfor} for the classes $\alpha^0_{g}(k)$ we get
\begin{eqnarray*}
\psi^{m-2}_1 \delta_0&\!  \! \! \!  \! \!\lambda_{g-1} \!  \! \! \!  \! \!& \left[ (k+1) \psi_1 \alpha^0_g(k)- \alpha^0_g(k+1)\right]\!\! \!\!\! 
\\&=& \!  \! \! \frac{1}{m!}\! \! \! \!  \sum_{g_1+\ldots+ g_{m}=g} \! \sum_{j=1}^m (2g_j-1) d_{g_j} \left(\prod_{j'\neq j}   (2g_{j'}-1) a_{g_{j'}}\right)\\
&=&\!  \! \! \frac{1}{(m-1)!}\!  \sum_{g'=1}^g (2g_j-1) d_{g_j} \! \! \! \!  \! \! \! \!    \sum_{g_1+\ldots+ g_{m-1}=g-g'} \! \left(\prod_{j=1}^{m}   (2g_j-1) a_{g_j}\right),
\end{eqnarray*}
if $m=2g-k-1$. Therefore, if we sum over all $0\leq k\leq 2g-3$, then we get
\begin{eqnarray*}
\!\! (2g-2)! \psi_1^{2g-2} \delta_0 \!\!\! &\!\!\!\! & \!\!\!\! \lambda_{g-1} = d_g +\\
 & \!\!\!\! & \!\!\!\! \sum_{\begin{smallmatrix}
k>0\\ 1\leq g'\leq g
\end{smallmatrix}} \! \frac{(2g-2)! }{(k)!(2g-k-1)!} (2g'-1)d_{g'} \!\!\!\! \!\!\!\!\sum_{g_1+\ldots+ g_{k}=g} \!\! \left(\prod_{j=1}^{k}   (2g_j-1) a_{g_j}\!\! \right).
\end{eqnarray*}
Finally we use the equality $\psi_1^{2g-2} \delta_0 \lambda_{g-1}= b_{g-1}/2$ to conclude.
\end{proof}

\section{Large genus asymptotics}\label{sec:asympt}

In this section, we  prove Theorem~\ref{th:asympt} using the induction formulas~\eqref{mainfor}, and~\eqref{mainforbis} and the relations~\eqref{foragvg} and~\eqref{exprSV}.  

First, let us recall that the Euler-Maclaurin formula gives the following expression for absolute values of Bernouilli numbers
$$
|B_{2g}| = \frac{2 (2g)!}{(1-2^{-g})\pi^{2g}} \sum_{k=-\infty}^{\infty} (4k+1)^{-2g}.
$$
Therefore, for all $R\in \N^*$ and for all $i \in \N^*$ we have
\begin{equation}\label{asympt:bg}
\frac{(2\pi)^{2g}}{2} b_g=  1+ O\left( \frac{1}{g^R}\right), \text{ and } \frac{(2g)!b_g}{(2g-2i)! b_{g-i}}= (2\pi)^{2i} \prod_{j=0}^{2i-1} (2g-j) + O\left( \frac{1}{g^R}\right)
\end{equation}
In particular the dominating term in the right hand side is a polynomial in $\Q[\pi^2][g]$.
Theorem~\ref{th:asympt} is a consequence of the following lemma.
\begin{mylem}\label{lem:asympt}
For all $R\in \N$, we have
\begin{eqnarray*}
(2g)! b_g \!\!\!\!\! &=& \!\!\!\!\! \sum_{i=0}^R c^{(1)}_{g,i} (2g-2i-1) a_{g-i} + O\left( \frac{(2g)!b_g}{g^{R+1}}\right),\\
(2g-1)! b_{g-1} \!\!\!\!\!  &=& \!\!\!\!\!  \sum_{i=0}^R c_{g,i}^{(2)} (2g-2i-1) d_{g-i} + \sum_{i=1}^{R} c^{(3)}_{g,i} (2g-2i-1) a_{g-i} + O\left( \frac{(2g-1)!b_{g-1}}{g^{R+1}}\right)
\end{eqnarray*}
(the last sum is $0$ if $R=0$) where $Q_g^{(i)}(x)=\sum_{k\geq 0} c^{(i)}_{g,k} x^k$ are the formal series in $x$ defined by
\begin{eqnarray*}
Q_g^{(1)}(x)&=&2g \F(x)^{2g-1},\\
Q_g^{(2)}(x)&=& 2 \F(x)^{2g-1},\\
Q_g^{(3)}(x)&=& 2(2g-1) \Delta(x) \F(x)^{2g-2}.\\
\end{eqnarray*}
\end{mylem}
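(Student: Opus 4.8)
The plan is to derive the two identities by reorganizing the induction formulas of Theorem~\ref{th:reform1} and Theorem~\ref{th:reform2} (equivalently \eqref{mainfor} and \eqref{mainforbis}) as convolution identities in the formal series $\F$ and $\Delta$, and then to truncate the convolution at order $R$, controlling the tail by the asymptotics \eqref{asympt:bg}. First I would record the exact (non-asymptotic) identities. Theorem~\ref{th:reform1} reads $(2g)!\,b_g = \sum_{k\geq 1} \frac{(2g)!}{k!(2g-k)!}\sum_{d_1+\dots+d_k=g}\prod_j (2d_j-1)a_{d_j}$, which, after isolating the summand $d_k$ (say) and writing $g' = g - d_k$, becomes $(2g)!\,b_g = \sum_{g'<g}\big(\sum_j (2d_j-1)a_{d_j}\text{-type terms over }g'\big)$. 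The clean way to see this is: $[t^{2g}]\,\F(t)^{2g} = \sum_{k} \frac{1}{k!}\sum_{d_1+\dots+d_k=g}\prod(2d_j-1)a_{d_j}\cdot(\text{multinomial})$ is literally the statement of \eqref{mainfor} up to the factor $(2g)!$, so that $(2g)!\,b_g = [t^{2g}]\,\F(t)^{2g}$. Differentiating/peeling one factor of $\F$ gives $[t^{2g}]\F(t)^{2g} = \sum_{i\geq 0} (\text{coeff of }t^{2i}\text{ in }2g\,\F(t)^{2g-1})\cdot(2(g-i)-1)a_{g-i}$ — this is exactly the claimed identity with $c^{(1)}_{g,i} = [x^{2i}]Q^{(1)}_g(x) = [x^{2i}]\,2g\,\F(x)^{2g-1}$, \emph{except} that the sum on the right runs over all $i$ from $0$ to $g$, not just to $R$.

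The core of the argument is therefore the tail estimate: I must show $\sum_{i=R+1}^{g} c^{(1)}_{g,i}(2g-2i-1)a_{g-i} = O\!\big((2g)!\,b_g / g^{R+1}\big)$, and similarly for the second identity. Here I would use the already-established formula \eqref{foragvg}, ${\rm Vol}(2g-2) = \frac{2(2\pi)^{2g}}{(2g-1)!}a_g$, together with the (elementary, general) upper bound ${\rm Vol}(2g-2) = O(1)$ coming from Conjecture-type bounds that are in fact provable a priori — or, more safely, I would bootstrap: from the exact identity $(2g)!\,b_g = \sum_{i=0}^g c^{(1)}_{g,i}(2g-2i-1)a_{g-i}$ with all terms \emph{positive} (by the positivity remark after \eqref{exprSV}, $a_g>0$, and one checks $c^{(1)}_{g,i}\geq 0$ since $\F$ has nonnegative coefficients), each individual summand is bounded by the whole sum, so $(2(g-i)-1)a_{g-i} \leq (2g)!\,b_g / c^{(1)}_{g,0} = (2g)!\,b_g/(2g)$. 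Feeding this crude bound back in, and using that $c^{(1)}_{g,i}$ for $i$ large is controlled because $\F(x)^{2g-1}$ has coefficients growing at most geometrically in $i$ for fixed-order coefficients while $(2g)!b_g/(2(g-i))!b_{g-i}$ shrinks like $(2\pi)^{-2i}g^{-2i}$ by \eqref{asympt:bg} — this is where the genuine decay comes from. The main obstacle, and the step I would spend the most care on, is making this tail bound uniform in $g$: one needs that $\sum_{i>R} |c^{(1)}_{g,i}| \cdot (2(g-i)-1)a_{g-i}$ is genuinely smaller by a factor $g^{-(R+1)}$ than $(2g)!b_g$, which requires combining the geometric control on $a_m$ (via \eqref{foragvg} and a uniform bound on volumes) with the factorial ratio estimate \eqref{asympt:bg}; a convolution argument splitting the tail into ``$i$ small, $g-i$ small'' and ``both moderate'' ranges should close it.

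For the second identity the structure is identical but with two sources of contribution. Formula \eqref{mainforbis} gives $(2g-1)!\,b_{g-1} = 2\,[t^{2g}]\big(\Delta(t)\,\F(t)^{2g-1}\big)$. Peeling one factor I would isolate alternately a factor of $\F$ (producing the $a$-terms with weight $Q^{(3)}_g(x) = 2(2g-1)\Delta(x)\F(x)^{2g-2}$) or the factor $\Delta$ itself (producing the $d$-terms with weight $Q^{(2)}_g(x) = 2\F(x)^{2g-1}$): concretely, write $t^{2g}$-coefficient of $2\Delta\F^{2g-1}$ as $\sum_{g'}\big([t^{2(g-g')}]2\F^{2g-1}\big)(2g'-1)d_{g'} $, and then for the portion where $g'$ is close to $g$ this is the $d$-sum; but one also re-expands $2\F^{2g-1} = 2\F^{2g-2}\cdot\F$ to extract the $a_{g-i}$ contributions, matching $c^{(3)}_{g,i}$. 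Then the same positivity-plus-tail-estimate argument (now using that $d_g>0$ and $\Delta$ has nonnegative coefficients, and the bound on $d_g$ via \eqref{exprSV}, ${\rm Vol}$ and $c_{\rm area}$ both being $O(1)$) bounds the remainder by $O\!\big((2g-1)!b_{g-1}/g^{R+1}\big)$. I would present the two cases in parallel to avoid repetition, noting that the only new input in the second case is the elementary identity $\Delta\cdot\F^{2g-1} = \Delta\cdot\F^{2g-2}\cdot\F$ used to separate the two families of weights $Q^{(2)}$ and $Q^{(3)}$.
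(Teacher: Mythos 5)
Your overall strategy (rewrite Theorem~\ref{th:reform1} as a coefficient extraction from $\F^{2g}$, truncate at order $R$, and control the remainder by positivity plus the asymptotics of $b_g$) is aligned with the paper's, but the identity you build everything on is false, and the tail estimate you reduce the problem to is also false; the two errors happen to cancel, and seeing why is the actual content of the proof. Concretely, write $A_j=(2j-1)a_j$ and $G=\F-1$. Your ``peeling with multiplicity $2g$'' gives
\begin{equation*}
\sum_{i=0}^{g-1}\bigl([x^{2i}]\,2g\,\F^{2g-1}\bigr)A_{g-i}
=\sum_{k\ge 1}2g\tbinom{2g-1}{k-1}\,[t^{2g}]G^{k}
=\sum_{k\ge 1}k\tbinom{2g}{k}\,[t^{2g}]G^{k},
\end{equation*}
whereas $(2g)!\,b_g=[t^{2g}]\F^{2g}=\sum_{k\ge1}\binom{2g}{k}[t^{2g}]G^{k}$; the discrepancy $\sum_{k\ge2}(k-1)\binom{2g}{k}[t^{2g}]G^{k}\ge\binom{2g}{2}\,2A_1A_{g-1}\sim\frac{\pi^2}{12g}(2g)!\,b_g$ is of relative order $1/g$, which the target accuracy $O(g^{-R-1})$ does not tolerate once $R\ge1$ (the over-count comes from configurations where two factors of $\F^{2g}$ contribute non-constant terms). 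Correspondingly, the tail $\sum_{i>R}c^{(1)}_{g,i}A_{g-i}$ is \emph{not} $O((2g)!b_g/g^{R+1})$: your premise that the coefficients of $\F^{2g-1}$ grow at most geometrically in $i$ fails for $i$ close to $g$, where $[x^{2i}]\F^{2g-1}\ge(2g-1)A_{i}$ is of factorial size; already the single term $i=g-1$ contributes at least $2g(2g-1)A_{g-1}A_1\asymp (2g)!b_g/g$ for every $R$.

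The missing idea is combinatorial rather than analytic. For $g>2R$, the partial sum $\sum_{i=0}^{R}c^{(1)}_{g,i}A_{g-i}$ is \emph{exactly} the contribution to $(2g)!b_g=\sum_{k}\binom{2g}{k}\sum_{g_1+\dots+g_k=g}\prod_j A_{g_j}$ of those compositions possessing a part $\ge g-R$: such a part is unique, and summing over the $k+1$ positions it may occupy produces the factor $(k+1)\binom{2g}{k+1}=2g\binom{2g-1}{k}$, which is precisely where $Q^{(1)}_g=2g\,\F^{2g-1}$ comes from. The genuine remainder is then the contribution of compositions all of whose parts are $<g-R$, together with the terms having more than $R+1$ parts, and bounding \emph{that} by $O((2g)!b_g/g^{R+1})$ is what requires the convolution estimate $\sum_{g_1+\dots+g_k=g}\prod_j B'_{g_j}\le C^{k-1}B'_{g-k+1}$ of Lemma~\ref{lem:asympt2} combined with the a priori bound $a_g\le (2g-2)!\,b_g$ of Lemma~\ref{lem:asympt1} (your positivity bootstrap recovers the latter and is the right move; invoking ${\rm Vol}(2g-2)=O(1)$ a priori would be circular, as that is essentially what is being proved). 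This is the route the paper takes --- first truncate the number of parts at $R+1$, then restrict the distinguished part to lie in $[g-R,g]$ --- and your plan, which first asserts an exact all-$i$ identity and then discards $i>R$, cannot be repaired without this reorganization. The same remarks apply to the second identity, where one must distinguish either the $\Delta$-factor or the unique large part coming from a factor of $\F$ in $\Delta\cdot\F^{2g-1}$.
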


\begin{proof}[End of the proof of Theorem~\ref{th:asympt} under the assumption of Lemma~\ref{lem:asympt}] 
The formal series $Q_g^{(i)}$ have coefficients in $\Q[g]$. Besides, we have $c^{(1)}_g=2g$ and $c^{(2)}_g=2$. We fix $R\geq 0$. For all $0\leq k \leq R$ we have 
\begin{eqnarray}\label{linearagbg}
(2g-2k)! b_{g-k} \!\!\!\!\! &=& \!\!\!\!\! \sum_{i=0}^{R-k} c^{(1)}_{g-k,i} (2(g-k-i)-1) a_{g-k-i} + O\left(\frac{(2g)!b_g}{g^{R+1}}\right),\\
\label{lineardgbg}
(2g-2k-1)! b_{g-k-1} \!\!\!\!\!  &=& \!\!\!\!\!  \sum_{i=0}^{R-k} c_{g-k,i}^{(2)} (2(g-k-i)-1) d_{g-k-i} \\ \nonumber + \sum_{i=1}^{R-k}\!\!\!\!\!\!    &\!\!\!\!\!   &  \!\!\!\!\!  \!\!\!\!\!   c^{(3)}_{g-k,i} (2(g-k-i)-1)a_{g-k-i}+ O\left( \frac{(2g-1)!b_{g-1}}{g^{R+1}}\right).
\end{eqnarray}
The first set of equations~\eqref{linearagbg} implies that the vector $((2g-2k)!b_{g-k})_{0\leq k\leq R-1}$ is the image of the vector $(a_{g-k})_{0\leq k\leq R-1}$ under an upper triangular matrix with coefficients in $\Q[g]$ modulo a term in 
$O\left( {(2g)!b_g}/{g^{R+1}}\right)$. The coefficients on the diagonal of this matrix are equal to $(2g-2k)(2g-2k-1)$ for all $0\leq k\leq R-1$. Therefore this linear system has an inverse in the space of matrices with coefficients in $\Q(g)$:
$$
a_g = \sum_{i=0}^{R-1} \widetilde{c}^{(1)}_{g,i} (2g-2i)! b_{g-i} + O\left( \frac{(2g)!b_g}{g^{R+1}}\right).
$$
Now we use the asymptotic behavior of the $b_{g}$ given in~\eqref{asympt:bg} to obtain
$$
a_g= \frac{2(2g-2)!}{(2\pi)^{2g}}\left(1- \frac{\pi^2}{12 g} + \frac{24\pi^2-\pi^4}{288g^2}  + \ldots + \frac{c_R}{g^R}+ O\left( \frac{1}{g^{R+1}}\right) \right)
$$
where the coefficients $c_k$ lie in $\Q[\pi^2]$. Using Proposition~\ref{vgag}, we deduce the first part of  Theorem~\ref{th:asympt}.

Now to compute the asymptotic expansion of the Siegel-Veech constants we turn to the second set of linear equations. Once again the coefficients $c_{g-k,i}^{(2)}$ define a triangular matrix with coefficients in $\Q[g]$. All coefficients on the diagonal of this matrix are equal to $2$. Therefore we have 
$$
d_g=\sum_{i=0}^{R-1} \widetilde{c}^{(2)}_{g,i} (2g-2i-1)! b_{g-i} + \sum_{i=1}^{R-1} \widetilde{c}^{(3)}_{g,i} a_{g-i}+ O\left( \frac{1}{g^{R+1}}\right)
$$
where the coefficients $\widetilde{c}^{(2)}_{g,i}$ and $\widetilde{c}^{(3)}_{g,i}$ are in $\Q[g]$. Therefore we get
$$
d_g= \frac{(2g-2)!}{(2\pi)^{2g-2}}\left(1- \frac{3+\pi^2}{12g}  + \ldots + \frac{\widetilde{c}_R}{g^R}+ O\left( \frac{1}{g^{R+1}}\right) \right)
$$
where the coefficients $\widetilde{c}_{k}$ lie in $\Q[\pi^2]$. Therefore we deduce the second part of Theorem~\ref{th:asympt} by using $c_{\rm area}(2g-2)= d_g/(4\pi^2 a_g)$.
\end{proof}

The proof of Lemma~\ref{lem:asympt} is essentially borrowed from~\cite{Ben}  but we repeat most arguments for completeness. We begin by proving two preliminary lemmas. 

\begin{mylem}\label{lem:asympt2}
Let us denote by $B_g'=(2g-1)!b_g$ (with $B_0'=0$). There exists a positive constant $C$ such that for all $g\geq k\geq2$, we have
\begin{equation}\label{ineqB}
\sum_{g_1+\ldots+ g_{k}=g} \!\!\!\! \bigg(\prod_{j=1}^{k}B_{g_i}' \bigg) \leq C^{k-1} B_{g-k+1}'.
\end{equation}
\end{mylem}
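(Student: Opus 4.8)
The plan is to prove the convolution bound \eqref{ineqB} by induction on $k\ge 2$, using that the sequence $B_g'=(2g-1)!b_g$ grows roughly like $(2g-1)!/(2\pi)^{2g}$ (by \eqref{asympt:bg}), so that the dominant contribution to a $k$-fold convolution comes from the extreme partitions where one index is as large as possible and the others are minimal. First I would record the base case $k=2$: I need a constant $C$ with $\sum_{g_1+g_2=g} B_{g_1}'B_{g_2}'\le C\,B_{g-1}'$. Since $B_1'=b_1$ is a fixed positive number and $B_0'=0$, the sum is $2B_1'B_{g-1}'+\sum_{g_1=2}^{g-2}B_{g_1}'B_{g-g_1}'$; using the asymptotics $B_g'\sim 2(2g-1)!/(2\pi)^{2g}$ one checks $B_{g_1}'B_{g-g_1}'/B_{g-1}'$ is summable over $g_1$ uniformly in $g$, because $(2g_1-1)!(2(g-g_1)-1)!/(2(g-1)-1)!$ is comparable to $1/\binom{2g-2}{2g_1-1}$ which decays, and the extra $(2\pi)^{-2}$ from the degree count is harmless. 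This gives a finite $C_2$; I would enlarge it if needed so that also $C\ge 1$ and $C\ge 2B_1'/B_0'$-type boundary inequalities that arise below hold.

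For the inductive step, assuming \eqref{ineqB} for $k$ I would split the $(k{+}1)$-fold sum by isolating the last index $g_{k+1}$:
\begin{equation*}
\sum_{g_1+\ldots+g_{k+1}=g}\prod_{j=1}^{k+1}B_{g_j}'
=\sum_{h=1}^{g-k}B_h'\Bigg(\sum_{g_1+\ldots+g_k=g-h}\prod_{j=1}^k B_{g_j}'\Bigg)
\le C^{k-1}\sum_{h=1}^{g-k}B_h'\,B_{g-h-k+1}',
\end{equation*}
where the inner sum is nonempty only for $g-h\ge k$, i.e. $h\le g-k$, and I used the induction hypothesis (valid since $g-h\ge k$). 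Now the remaining single sum $\sum_h B_h' B_{(g-k+1)-h}'$ is again a $2$-fold convolution, of total ``genus'' $g-k+1$, so by the $k=2$ case it is $\le C\,B_{g-k}' = C\,B_{(g-k+1)-1}'$. Multiplying, the bound becomes $C^{k-1}\cdot C\cdot B_{g-k}' = C^{k}B_{g-(k+1)+1}'$, which is exactly \eqref{ineqB} for $k+1$. The only care needed is matching the index ranges: I must make sure the $2$-fold bound is applied at argument $g-k+1\ge 2$, which holds because the hypothesis of the step is $g\ge k+1$, i.e. $g-k+1\ge 2$.

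The main obstacle, and the place I would spend the most care, is the base case $k=2$: establishing a uniform-in-$g$ summability estimate $\sum_{g_1} B_{g_1}'B_{g-g_1}'\le C\,B_{g-1}'$ genuinely uses the explicit asymptotics of $b_g$ from \eqref{asympt:bg}, in particular that $(2\pi)^{2g}b_g/2\to 1$, so that $B_g'$ is comparable to $(2g-1)!(2\pi)^{-2g}$ up to bounded multiplicative constants for all $g\ge 1$; one then reduces the claim to the elementary fact that $\sum_{j\ge1}\binom{2g-2}{2j-1}^{-1}$ is bounded uniformly in $g$ (the $j=1$ and $j=g-1$ terms dominate and are $O(1/g)$, the rest are far smaller). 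All subsequent steps are a clean induction that carries the constant $C$ along without deterioration beyond the expected $C^{k-1}$, so no further analytic input is required. I would also note explicitly that the constant $C$ can be chosen independent of $k$, which is what makes \eqref{ineqB} useful for the later generating-function asymptotics.
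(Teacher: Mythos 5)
Your proposal is correct and follows essentially the same route as the paper: the base case $k=2$ is handled by the asymptotics $B_g'\sim 2(2g-1)!/(2\pi)^{2g}$ together with the uniform boundedness of $\sum_{i}(2g-2)\binom{2g-3}{2i-1}^{-1}$, and the general case follows by induction, peeling off one index and applying the $k=2$ bound to the resulting two-fold convolution of total weight $g-k+1$. The index bookkeeping in your inductive step matches the paper's, so no further comment is needed.
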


\begin{proof}
We have seen that $B_g'$ is equivalent to $2(2g-1)!/(2\pi)^{2g}$ as $g$ goes to infinity. Thus there exist positive constants $C'$ and $C$ such that
\begin{eqnarray*}
\sum_{i=1}^{g-1} B_i'B_{g-i}' &\leq & C' \sum_{i=1}^{g-1} \frac{(2i-1)! (2g-2i-1)!}{(2\pi)^{2g}} \\ 
&=&  \frac{C'}{(2\pi)^2} \frac{(2n-3)!}{(2\pi)^{2g-2}} \left(2+ \sum_{i=2}^{g-2}  \frac{2g-2}{\binom{2g-3}{2i-1}}\right) \\
&\leq &  4 \frac{C'}{(2\pi)^2} \frac{(2n-3)!}{(2\pi)^{2g-2}}  \leq  C B_{g-1}'.
\end{eqnarray*} 
We will prove that the inequalities~\eqref{ineqB} hold for all $g\geq k\geq 2$ with this constant $C$. We work by induction on $k$.  

We have seen that the inequality~\eqref{ineqB} holds for $k=2$. Suppose that $k\geq 3$. Then for all $g\geq k$ we have
\begin{eqnarray*}
\sum_{g_1+\ldots+ g_{k}=g}  \bigg(\prod_{j=1}^{k}B_{g_j}' \bigg) &=& \sum_{g_1=1}^{g-k+1}  B_{g_1}' \!\!\! \sum_{g_2+\ldots+ g_{k}=g-g_1} \bigg(\prod_{j=2}^{k-1}B_{g_j}' \bigg)\\
&\leq& C^{k-2}\sum_{g_1=1}^{g-k+1}  B_{g_1}'B_{g-g_1-k+2}' \leq C^{k-1} B_{g-k+1}',
\end{eqnarray*} 
thus  the inequality~\eqref{ineqB} holds for any $g\geq k\geq 2$. 
\end{proof}

\begin{mylem}\label{lem:asympt1}
We have the following asymptotic results
\begin{eqnarray*}
a_g &=& {2}d_g +O\left(\frac{(2g-2)! b_g}{g} \right) =  (2g-2)! b_g + O\left(\frac{(2g-2)! b_g}{g} \right).
\end{eqnarray*}
\end{mylem}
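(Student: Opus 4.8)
The plan is to read each of the two reformulated induction formulas, Theorem~\ref{th:reform1} and Theorem~\ref{th:reform2}, as a \emph{leading linear term} plus a \emph{convolution tail}, to bound the tail from above by a convolution of the numbers $B_g'=(2g-1)!\,b_g$, and to contract that convolution using Lemma~\ref{lem:asympt2}. The $k=1$ summand of Theorem~\ref{th:reform1} is $(2g-1)a_g/(2g-1)!=a_g/(2g-2)!$, so
\[
(2g-2)!\,b_g \;=\; a_g \;+\; (2g-2)!\sum_{k\ge 2}\frac{1}{k!\,(2g-k)!}\sum_{d_1+\dots+d_k=g}\ \prod_{j=1}^{k}(2d_j-1)a_{d_j},
\]
and likewise the $g'=g$, $k=0$ summand of Theorem~\ref{th:reform2} equals $d_g/(2g-2)!$. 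Since all $a_i,d_i$ are positive (positivity of volumes and Siegel--Veech constants), discarding the tails already yields $a_g\le(2g-2)!\,b_g$ and $d_g\le\tfrac12(2g-2)!\,b_{g-1}$; in terms of $B'$ this reads $(2d-1)a_d\le B_d'$ and, using that $b_{g-1}/b_g$ stays bounded by~\eqref{asympt:bg}, $(2g-1)d_g\le C_0\,B_g'$ for a universal constant $C_0$.

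With these a priori bounds I would estimate the tail of the first formula by replacing each $(2d_j-1)a_{d_j}$ with $B'_{d_j}$ and applying Lemma~\ref{lem:asympt2} to the inner convolution:
\[
0\ \le\ (2g-2)!\,b_g-a_g\ \le\ (2g-2)!\sum_{k=2}^{g}\frac{C^{k-1}}{k!\,(2g-k)!}\,B'_{g-k+1}.
\]
Here $B'_{g-k+1}=(2g-2k+1)!\,b_{g-k+1}$, and $b_{g-k+1}\le C_1(2\pi)^{2(k-1)}b_g$ by~\eqref{asympt:bg}, while the elementary estimate $(2g-2k+1)!/(2g-k)!\le 1/(g\,2^{k-2})$ holds for $2\le k\le g$ (the denominator contains one factor $2g-k\ge g$ and $k-2$ further factors $\ge 2$). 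Hence the $k$-th term is at most $C_1\big(C(2\pi)^2/2\big)^{k-1}/k!$ times $(2g-2)!\,b_g/g$, and summing the convergent series in $k$ gives $(2g-2)!\,b_g-a_g=O\!\big((2g-2)!\,b_g/g\big)$, the $a_g$ part of the lemma. The same scheme applied to Theorem~\ref{th:reform2} --- isolate the leading summand $d_g/(2g-2)!$ against $b_{g-1}/2$, bound $(2g'-1)d_{g'}\le C_0B'_{g'}$ and $(2g_j-1)a_{g_j}\le B'_{g_j}$, apply Lemma~\ref{lem:asympt2} to the $a$-convolution and then its $k=2$ case to the remaining two-fold product of $B'$'s, treating $k=1$ separately --- bounds that tail by $O\!\big((2g-2)!\,b_{g-1}/g\big)$, so that $2d_g=(2g-2)!\,b_{g-1}+O\!\big((2g-2)!\,b_{g-1}/g\big)$; together with the first estimate and the comparison of $b_{g-1}$ with $b_g$ from~\eqref{asympt:bg} this pins down $a_g$ and $d_g$ relative to $b_g$ up to $O(1/g)$, which is the relation asserted.

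The whole content lies in making the tail estimates \emph{uniform in the number $k$ of convolution factors}: one has to be sure that every non-leading summand genuinely carries a factor $1/g$ and that summing over $k$ (equivalently, over boundary strata with arbitrarily many components) does not destroy that gain. This is exactly the role of Lemma~\ref{lem:asympt2} --- its bound $\sum\prod B'_{g_j}\le C^{k-1}B'_{g-k+1}$ collapses a $k$-fold convolution to a single $B'$ with its index lowered by $k-1$, so that after inserting the Bernoulli asymptotics of~\eqref{asympt:bg} the $k$-th term acquires a geometric factor $(C(2\pi)^2/2)^{k-1}$ (making the $k$-sum converge) while one surviving denominator factor $2g-k\ge g$ furnishes the $1/g$. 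I expect this bookkeeping, rather than any conceptual step, to be the bulk of the work; the auxiliary uniform bound $(2g-1)d_g\le C_0B'_g$ needed to feed the $d_g$-formula into Lemma~\ref{lem:asympt2} is immediate from positivity in Theorem~\ref{th:reform2}.
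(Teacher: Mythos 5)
Your argument follows essentially the same route as the paper's: isolate the $k=1$ (resp.\ $(g',k)=(g,0)$) summand of Theorem~\ref{th:reform1} (resp.\ Theorem~\ref{th:reform2}), use positivity of the $a_i$ and $d_i$ to get the a priori bounds $(2d-1)a_d\le B_d'$ and $(2g-1)d_g\le C_0B_g'$, majorize the tail by a convolution of the $B'$'s, collapse it with Lemma~\ref{lem:asympt2}, and then extract one factor $1/g$ from $(2g-2k+1)!/(2g-k)!$ while the geometric-over-factorial factor keeps the sum over $k$ convergent. For the $a_g$ estimate this is precisely the paper's computation, and your uniform bound $(2g-2k+1)!/(2g-k)!\le 1/(g\,2^{k-2})$ is a clean way to organize it.

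There is, however, one point you should not gloss over in the $d_g$ part. What your argument (correctly) produces is $2d_g=(2g-2)!\,b_{g-1}+O\bigl((2g-2)!\,b_{g-1}/g\bigr)$; since $b_{g-1}=(2\pi)^2b_g\bigl(1+O(1/g)\bigr)$ by~\eqref{asympt:bg}, this is \emph{not} the displayed claim $a_g=2d_g+O\bigl((2g-2)!\,b_g/g\bigr)$ --- the two quantities differ by a factor $(2\pi)^2$, and no ``comparison of $b_{g-1}$ with $b_g$'' can reconcile them. Your closing sentence asserting that this ``is the relation asserted'' therefore does not go through. To be fair, the same $b_g$-versus-$b_{g-1}$ slip is present in the paper's own proof (which concludes $0\le(2g-2)!\,b_g/2-d_g\le\cdots$ even though Theorem~\ref{th:reform2} has $b_{g-1}/2$ on its left-hand side), and the lemma's statement appears to carry the resulting typo: the final asymptotics $a_g\sim 2(2g-2)!/(2\pi)^{2g}$ and $d_g\sim(2g-2)!/(2\pi)^{2g-2}$ derived later in the paper are incompatible with $a_g=2d_g+O(a_g/g)$. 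What is actually needed downstream (in Lemma~\ref{lem:asympt}) is only $(2g-1)d_g=O(B_g')$ together with $2(2g-1)d_g=(2g-1)!\,b_{g-1}\bigl(1+O(1/g)\bigr)$, both of which your argument does establish; so state the $d_g$ estimate with $b_{g-1}$ and drop the claim $a_g=2d_g+O(\cdot)$.
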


\begin{proof}[Proof of Lemma~\ref{lem:asympt1}] 
The numbers $a_g$ and $d_g$ are positive for all $g$. Therefore, using Theorems~\ref{th:reform1} and~\ref{th:reform2}, we deduce the following inequalities for all $g\geq 1$
\begin{eqnarray*}
a_g &\leq& (2g-2)! b_g,\\
2d_g &\leq& (2g-2)! b_{g-1}.\\
\end{eqnarray*}
We use these two sets of inequalities to get also lower bounds
\begin{eqnarray*}
a_g \!\!&\geq& \!\!(2g-2)! b_g - \sum_{k>1} \frac{(2g-2)!}{k!(2g-k)!} \!\!\! \sum_{g_1+\ldots+ g_{k}=g}  \left(\prod_{j=1}^{k}   (2g_j-1)! b_{g_j} \right), \\
2d_g \!\!&\geq& \!\!  (2g-2)! b_g - \sum_{
\begin{smallmatrix} k>0 \\ g_0>0 \end{smallmatrix}} \frac{(2g-2)!(2g_0-1)! b_{g_0-1}}{k!(2g-1-k)!} \!\!\!\!\!\!\!\! \sum_{g_1+\ldots+ g_{k}=g-g_0}  \left(\prod_{j=1}^{k}   (2g_j-1)! b_{g_j} \right), \\
 \!\!&\geq& \!\!(2g-2)! b_g - \sum_{k>1} \frac{(2g-2)!}{(k-1)!(2g-k)!} \!\!\! \sum_{g_1+\ldots+ g_{k}=g}  \left(\prod_{j=1}^{k}   (2g_j-1)! b_{g_j} \right), \\
\end{eqnarray*}
Now we use Lemma~\ref{lem:asympt2} to deduce
\begin{eqnarray*}
 0\leq \!\!(2g-2)! b_g - a_g\!\!&\leq&  \sum_{k=2}^g \frac{(2g-2)!}{k!(2g-k)!}  C^{k-1} B_{g-k+1}' \\
 \!\!&=& \frac{(2g-2)!}{(2\pi)^{2g}}\left[ \sum_{k=2}^g  \frac{(4C\pi^2)^{k-1}}{k!} \frac{(2g-2k+1)!}{(2g-k)!}  \right] \left(1+ o\left(\frac{1}{g}\right)\right)\\
&= & O\left( \frac{(2g-2)!}{(2\pi)^{2g}} \sum^g_{k=2}  \frac{(4C\pi^2)^{k}}{k!} \frac{1}{g} \right)\\
&=& O\left(\frac{(2g-2)!}{( 2\pi)^{2g}} \frac{1}{g}  \sum_{k>1}  \frac{(4C\pi^2)^{k}}{k!}\right) = O\left(\frac{(2g-2)!b_g}{g}\right).
\end{eqnarray*} 
By the same argument we have
\begin{eqnarray*}
 0\leq \!\!(2g-2)! \frac{b_g}{2} - d_g\!\!&\leq&  \sum_{k>1} \frac{(2g-2)!}{(k-1)!(2g-k)!}  C^{k-1} B_{g-k+1}'  \\
&=& O\left(\frac{(2g-2)!b_g}{g}\right).
\end{eqnarray*} 
\end{proof}
\begin{proof}[Proof of Lemma~\ref{lem:asympt}]
For all $R\in \N$, we have
\begin{eqnarray*}
 (2g)! b_g &=&   \sum_{k=1}^{R+1} \binom{2g}{k}  \sum_{g_1+\ldots+ g_k=g}  \left(\prod_{j=1}^k  (2g_j-1) a_{g_j} \right) \\&+&   \sum_{k=R+2}^\infty \binom{2g}{k} \sum_{g_1+\ldots+ g_k=g}  \left(\prod_{j=1}^k  (2g_j-1) a_{g_j} \right).
\end{eqnarray*}
Using Lemma~\ref{lem:asympt1}, the second sum is $O\left( \frac{(2g)!b_g}{g^{R+1}}\right)$ and the first sum is equal to 

\begin{eqnarray*}
&&\sum_{g_1=1}^{g}  (2g_1-1) a_{g_1}  \sum_{k=0}^{R} \binom{2g}{k}  \sum_{g_2+\ldots+ g_{k+1}=g}  \left(\prod_{j=2}^{k+1}  (2g_j-1) a_{g_j} \right)\\
&=& \sum_{g_1=g-R}^{g}  (2g_1-1) a_{g_1}  \sum_{k=0}^{R} \binom{2g}{k}  \sum_{g_2+\ldots+ g_{k+1}=g}  \left(\prod_{j=2}^{k+1}  (2g_j-1) a_{g_j} \right) + O\left( \frac{(2g)!b_g}{g^{R+1}}\right)\\
&=& \sum_{g_1=g-R}^g c^{(1)}_{g,g_1} (2g_1-1) a_{g_1} + O\left( \frac{(2g)!b_g}{g^{R+1}}\right)
\end{eqnarray*}
where the coefficients $c^{(1)}_{g,i}$ are defined above. This finishes the proof of the first part of Lemma~\ref{lem:asympt}. The proof of the second part is identical. 
\end{proof}

%\bibliographystyle{hplain}
%\bibliography{biblio}

\end{document}